\newcommand{\eq}{\normalcolor{}}
\newcommand{\Hc}{{\mathcal H}}
\newcommand{\fpd}{\mbox{\rm FPdim\,}}
\newcommand{\brp}{\mbox{\rm BrPic\,}}
\newcommand{\otk}{{\otimes_{\ku}}}
\newcommand{\nic}{{\mathfrak B}}
\newcommand{\qs}{{\mathfrak R}}
\newcommand{\os}{{\mathfrak O}}
\newcommand{\cab}{\underline{{\mathcal C}}}
\newcommand{\Ss}{{\mathcal S}}
\newcommand{\ot}{{\otimes}}
\newcommand{\inbi}{\operatorname{InnbiGal}}
\newcommand{\outb}{\operatorname{OutbiGal}}
\newcommand{\biga}{\operatorname{BiGal}}
\newcommand{\kc}{{\mathcal K}}
\newcommand{\Ac}{{\mathcal A}}
\newcommand{\ele}{{\mathcal L}}
\newcommand{\ca}{{\mathcal C}}
\newcommand\Comod{\operatorname{Comod}}
\newcommand{\Do}{{\mathcal D}}
\newcommand{\Bc}{{\mathcal B}}
\newcommand{\Fc}{{\mathcal F}}
\newcommand{\cop}{\rm{cop}}
\newcommand{\ku}{{\Bbbk}}
\newcommand{\Z}{{\mathbb Z}}
\newcommand{\Na}{{\mathbb N}}
\newcommand{\uno}{ \mathbf{1}}
\newcommand{\C}{{\mathbb C}}
\newcommand{\id}{\mbox{\rm id\,}}
\newcommand{\Id}{\mbox{\rm Id\,}}
\newcommand\Rep{\operatorname{Rep}}
\newcommand\co{\operatorname{co}}
\newcommand\Hom{\operatorname{Hom}}
\newcommand{\gr}{\mbox{\rm gr\,}}
\renewcommand{\_}[1]{\mbox{$_{\left( #1 \right)}$}}
\theoremstyle{plain}
\numberwithin{equation}{section}
\newtheorem{teo}{Theorem}[section]
\newtheorem{lema}[teo]{Lemma}
\newtheorem{cor}[teo]{Corollary}
\newtheorem{prop}[teo]{Proposition}
\theoremstyle{definition}
\newtheorem{defi}[teo]{Definition}
\theoremstyle{remark}
\newtheorem{rmk}[teo]{Remark}
\def\pf{\begin{proof}}
\def\epf{\end{proof}}
\theoremstyle{remark}
\begin{document}

\title[Crossed extensions of  supergroup algebras]
{Crossed extensions of the corepresentation category of finite supergroup algebras}
\author[ Mej\'ia Casta\~no and Mombelli  ]{Adriana Mej\'ia Casta\~no and Mart\'in Mombelli
 }
\address{
}
 \email{ sighana25@gmail.com}
\email{martin10090@gmail.com, mombelli@mate.uncor.edu
\newline \indent\emph{URL:}\/ http://www.mate.uncor.edu/$\sim$mombelli/welcome.html}

\begin{abstract}  We present explicit examples finite tensor categories that are $C_2$-graded extensions of the
corepresentation category of certain finite-dimensional non-semisimple Hopf algebras.

\bigbreak
\bigbreak
\bigbreak
\bigbreak
{\em Mathematics Subject Classification (2010): 18D10, 16W30, 19D23.}

{\em Keywords:  tensor category, Hopf algebra, BiGalois extensions}
\end{abstract}

\date{\today}
\maketitle

\section{Introduction}

Throughout this paper we shall work over an algebraically closed field $\ku$ of characteristic zero.

Given a finite group $\Gamma$ a (faithful) $\Gamma$-grading on a  finite tensor category $\Do$ is a decomposition 
$\Do=\oplus_{g\in \Gamma} \Do_g$, where $\Do_g$ are full Abelian subcategories of $\Do$ such that
\begin{itemize}
\item[$\bullet$]  $\Do_g\neq 0$;
 \item[$\bullet$] $\ot:\Do_g\times \Do_h\to \Do_{gh}$ for all
$g, h\in \Gamma.$ 
\end{itemize}
 In this case $\ca=\Do_e$ is a tensor subcategory of $\Do$. The tensor category $\Do$ is a $\Gamma$-\emph{extension} of $\ca$.   The category 
 $\Do_g$ is an invertible $\ca$-bimodule category  for any $g\in \Gamma$. This gives rise to a group 
 homomorphism $c:\Gamma\to \brp(\ca)$, where $\brp(\ca)$ is the so-called \emph{Brauer-Picard group} 
 of $\ca$ introduced in \cite{ENO3}. The Brauer-Picard group of a finite tensor category $\ca$ is the group of 
 equivalence classes of invertible exact $\ca$-bimodule categories.

 Given a finite group
 $\Gamma$ and  a fusion category $\ca$, $\Gamma$-extensions of $\ca$ were classified in \cite{ENO3}. Any such extension 
 depends on a group map $c:\Gamma\to \brp(\ca)$ and certain cohomological data. The problem of giving concrete
 examples of 
 $\Gamma$-extensions of a given finite tensor category $\ca$ is that, besides the cohomological obstructions, the explicit computation of the 
 Brauer-Picard group is needed. The computation of Brauer-Picard group is in general complicated.
 Some computations of this group were done in \cite{M2}, \cite{NR}, \cite{GS}.
 
 \medbreak
 A different version of $\Gamma$-extensions was studied in \cite{Ga1}. In \emph{loc. cit}. the author studies and classifies 
 $\Gamma$-gradings $\Do=\oplus_{g\in \Gamma} \Do_g$ such that there are equivalences $\Do_g\simeq \Do_e$ as 
 $\Do_e$-module 
 categories for any $g\in \Gamma$.
 Such extensions are called $\Gamma$-\emph{crossed products} and they are
 classified by equivalence classes of \emph{crossed systems} 
 of $\Gamma$ over $\ca$. A crossed system of $\Gamma$ over $\ca$ consists of a collection
$\Sigma=((a_*,\xi^a),(U_{a,b},\sigma^{a,b}),\gamma_{abc})_{a,b,c\in \Gamma}$ where
\begin{itemize}

\item[$\bullet$] $(a_*,\xi^a):\ca\to\ca$ are monoidal autoequivalences, with monoidal structure
 $$\xi^a_{X,Y}:a_*(X\ot Y)\to a_*(X)\ot a_*(Y),\quad X,Y\in\ca;$$
\item[$\bullet$]  invertible objects $U_{a,b}\in\C$;
\item[$\bullet$]  natural isomorphisms
$$\sigma^{a,b}_X:a_*b_*(X)\ot U_{a,b}\to U_{a,b}\ot (ab)_*X, \quad X\in\ca;$$
\item[$\bullet$]  isomorphisms $\gamma_{a,b,c}:a_*(U_{b,c})\ot U_{a,bc}\to U_{a,b}\ot U_{ab,c}$,
\end{itemize}
such that they satisfy certain conditions.  If $ \Sigma$ is a crossed system of $\Gamma$ over $\ca$ we define a new 
category $\ca(\Sigma)=\oplus_{a\in \Gamma}\ca_a$ as Abelian categories and
$\ca_a=\ca$ for all $a\in \Gamma$. Denote by $[V,a]$ the object $V\in\ca_a$. In \cite{Ga1}  the author introduces a new
tensor product on the category $\ca(\Sigma)$ given by
$$[V,a]\ot [W,b]=[V\ot a_*(W)\ot U_{a,b},ab],$$
for any $V, W\in \ca$, $a, b\in \Gamma$. The conditions of crossed system ensures that 
$\ca(\Sigma)$ is indeed a monoidal category.

\medbreak

The present paper is devoted to give explicit examples of $C_2$-crossed products, where $C_2$ is the cyclic group of two 
elements, of the category $\Comod(H)$ of
finite-dimensional $H$-comodules, where $H$ is a supergroup algebra. Part of the information needed to compute 
crossed systems
in this particular case is the computation of
tensor autoequivalences $F:\Comod(H)\to \Comod(H)$, thus we need to compute the group $\biga(H)$
of equivalence classes of biGalois objects over $H$ \cite{S2}. The group $\biga(H)$  is interesting from the 
Hopf algebraic point of view.  It was computed only for few examples, see  \cite{Bic}, \cite{Ca}, \cite{S3}. In this work we 
present a  technique  to compute the biGalois group for supergroup algebras. This 
technique is different from the one presented by Schauenburg in  \cite{S3}.

\medbreak

The examples of $C_2$-crossed products presented here are representation categories of quasi-Hopf algebras. 
We do not know how to compute those quasi-Hopf algebras explicitly. We believe that these tensor categories 
are not equivalent to the representation categories of a (usual) Hopf algebra. We will address this question in a 
forthcoming paper. 

\medbreak

The paper is organized as follows. In Section \ref{s-int} we give  the required notations. In Section 
\ref{spa} we describe the Hopf algebra structure  of the supergroup algebras introduced in \cite{AEG}. 
For any supergroup algebra we describe the projective covers of its simple objects. This description will be useful when computing 
certain Frobenius-Perron dimensions. In Section \ref{spa-hb} we  classify
biGalois objects for supergroup algebras. BiGalois objects are a fundamental piece of information to compute 
examples of crossed systems.  In Section \ref{section:crossed} we recall the definition of crossed product tensor 
category as introduced in \cite{Ga1} and how they are constructed from crossed systems. We also give a 
more concrete description of crossed systems in the case the tensor category is the category of corepresentations
of a finite-dimensional Hopf algebra. In Section \ref{section:crossed-spg} we give explicit examples of 
crossed systems of $C_2$ over a supergroup algebra and we describe the monoidal structure. We obtain 
eight non-equivalent tensor categories and we compute their Frobenius-Perron dimensions.

\section{Preliminaries and notation}\label{s-int}

 If $\Gamma$ is a finite
group and $\psi\in Z^2(\Gamma,\ku^{\times})$ is a 2-cocycle, there is another 2-cocycle $\psi'$
in the same cohomology class as $\psi$ such that
\begin{equation}\label{2-cocycl}
 \psi'(g,1)=\psi'(1,g)=1, \quad \psi'(g,g^{-1})=1, \quad \psi'(g,h)^{-1}=\psi'(h^{-1}, g^{-1}),
\end{equation}
for all $g,h\in \Gamma$. From now on, all elements in $Z^2(\Gamma,\ku^{\times})$ representing some
class in $H^2(\Gamma,\ku^{\times})$ will satisfy equation \eqref{2-cocycl}. For references in group cohomology 
see  \cite{Br}.

If $H$ is a Hopf algebra and $g\in G(H)$ is a group-like element, we denote $\ku_g$ the one dimensional vector 
space generated by $ w_g$ with
left $H$-comodule given by
$$\lambda: \ku_g \to H\otk \ku_g, \quad\lambda(w_g)=g\ot w_g.$$
A \emph{coradically graded Hopf algebra } $H=\oplus_{i=0}^m H(i)$ is a Hopf algebra $H$ that is a graded   algebra
 and a graded coalgebra
such that the coradical filtration is given by $H_n=\oplus_{i=0}^n H(i)$. For references on Hopf algebra theory
see  \cite{Mo}.

If $H$ is a coradically graded Hopf algebra and $(A, \lambda)$ is a left $H$-comodule algebra, the
\emph{Loewy series } on $A$ is given by
$A_n=\lambda^{-1}(H_n\otk A)$, $n=1,\dots, m$, see \cite{M1}. The associated graded algebra $\gr A$ is again
a left $H$-comodule algebra. If the coradical $H_0$ is a Hopf subalgebra then $A_0$ is a left
$H_0$-comodule algebra. 
The comodule algebra $A$ is  $H$-\emph{simple} if it has no non-trivial  ideals $I\subseteq A$
such that $\lambda(I)\subseteq H\otk I$.

\subsection{Twisting Hopf algebras}

In this section we recall a well known procedure of deformation of a given Hopf algebra. The reader 
is refered to  \cite{Mo}. Let $H$ be a Hopf algebra . A Hopf 2-cocycle for $H$ is a convolution invertible
  map $\sigma: H\otk H\to
\ku$,  such that
\begin{align}\label{2-cocycle}
\sigma(x\_1, y\_1)\sigma(x\_2y\_2, z) &= \sigma(y\_1,
z\_1)\sigma(x, y\_2z\_2),
\\
\label{2-cocycle-unitario} \sigma(x, 1) &= \varepsilon(x) =
\sigma(1, x),
\end{align}
for all $x,y, z\in H$. There is a new Hopf
algebra structure constructed over the same coalgebra $H$ with 
product described by
\begin{equation}\label{product-twisted}
 x._{[\sigma]}y = \sigma(x_{(1)}, y_{(1)}) \sigma^{-1}(x_{(3)},
y_{(3)})\, \, x_{(2)}y_{(2)}, \qquad x,y\in H.
\end{equation}
This new Hopf algebra is denoted by  $H^{[\sigma]}$.
 If  $(A, \lambda)$ is a left
$H$-comodule algebra, then we can define a new product in $A$ by
\begin{align}\label{sigma-product} a._{\sigma}b = \sigma(a_{(-1)},
b_{(-1)})\, a_{(0)}.b_{(0)},\quad a,b\in A.
\end{align}
 We shall denote by $A_{\sigma}$ this new algebra. With the same
comodule structure, $A_{\sigma}$ is a left $H^{[\sigma]}$-comodule algebra.

Let $H$ be a  pointed coradically graded Hopf algebra with coradical $\ku G$, $G$ a finite group.
Let $\psi\in Z^2(G,\ku^{\times})$ be a 2-cocycle.
 There exists a Hopf 2-cocycle
$\sigma_\psi:H\otk H\to \ku$ such that for any homogeneous elements $x, y\in H$
\begin{align}\label{ltwist-def}
\sigma_\psi(x,y)=\begin{cases}
               \psi(x,y), & \text{if } x,y\in H(0);\\
0, &\text{otherwise.}
  \end{cases}
\end{align}
See \cite[Lemma 4.1]{GM}.

\subsection{Bicategories}

For a review on basic notions on bicategories
we refer to \cite{Be}. Any monoidal category $\ca$ gives rise to a bicategory $\cab$
with only one object. If $\ca, \Do$
are strict monoidal categories, a \emph{pseudo-functor} $(F,\xi):\cab\to \underline{\Do}$
is  a monoidal functor between the monoidal categories $\ca$ and $\Do$. If
$(F,\xi), (G,\zeta): \ca\to \Do$ are monoidal functors, a \emph{pseudo-natural transformation}
between them is a pair $(\eta_0, \eta):(F,\xi) \to (G,\zeta)$ where $\eta_0\in \Do$ is an object
and for any $X\in \ca$ natural transformations
$$\eta_X: F(X)\ot \eta_0\to \eta_0 \ot G(X),$$
such that for all $X, Y\in\ca$
\begin{equation}\label{pseudo-nat-m}
(\id_{\eta_0}\ot \zeta_{X,Y})\eta_{X\ot Y}=(\eta_X\ot \id_{G(Y)})
(\id_{F(X)}\ot \eta_{ Y})  (\xi_{X,Y}\ot \id_{\eta_0}).
\end{equation}
Given two pseudo-natural transformations $(\eta_0, \eta):(F,\xi) \to (G,\zeta)$ and $(\sigma_0, \sigma):(G,\zeta) \to (H,\chi)$
their composition is given by
\begin{equation}\label{pseudo-nat-comp}
 (\eta_0\ot\sigma_0, (\id_{\eta_0}\ot\sigma)(\eta\ot\id_{ \sigma_0})):(F,\xi) \to (H,\chi),
\end{equation}
and their tensor product is given by
\begin{equation}\label{prod-ten-pseudonat}
(F(\sigma_0)\ot\eta_0,  \psi ).
\end{equation}
where, for any $X\in  \ca$, 
$$\psi_X: F(G(X)) \ot F( \sigma_0)\ot  \eta_0 \to F(\sigma_0)\ot \eta_0\ot G(H(X)),$$
is given by the composition
$$ \psi_X= (\id_{F(\sigma_0)}\ot \eta_{H(X)})(\xi^{-1}_{\sigma_0,H(X)}\ot\id_{\eta_0})(F(\sigma)\xi_{G(X),\sigma_0}\ot\id_{\eta_0}).  $$

If $(\eta_0,\eta),(\sigma_0,\sigma):F\to G$ are pseudo-natural transformations, a \emph{modification} $\gamma:(\eta_0,\eta)\rightrightarrows(\sigma_0,\sigma)$ is a morphism $\gamma\in\Hom_{\ca}(\eta_0,\sigma_0)$ such that for all $V\in\ca$
\begin{equation}\label{modif}
(\gamma\ot\id_{G(V)})\eta_V=\sigma_V(\id_{F(V)}\ot\gamma).
\end{equation}
 Given two modifications $\gamma: (\eta_0,\eta)\rightrightarrows(\sigma_0,\sigma)$ and
$\overline{\gamma}:(\sigma_0,\sigma)\rightrightarrows(\tau_0,\tau)$
their composition is given by the composition of morphisms in $\Do$.

 $\gamma$ is an \emph{invertible modification}
if there exist another modification $\overline{\gamma}$ such that
$\gamma\circ\overline{\gamma}=\id_{\eta_0}$ and $\overline{\gamma}\circ\gamma=\id_{\sigma_0}$.

 We say that the pseudo-natural transformations
$(\eta_0, \eta), (\sigma_0, \sigma)$ are equivalent, and it is denoted by $(\eta_0, \eta) \sim(\sigma_0, \sigma)$
if there exists an invertible modification $\gamma: (\eta_0, \eta)\to  (\sigma_0, \sigma)$. 
A pair $(\eta_0, \eta)$ is a \emph{pseudo-natural isomorphism}
 if there exists another pseudo-natural transformation $(\sigma_0, \sigma)$
such that
$$(\eta_0, \eta)(\sigma_0, \sigma) \sim ( \uno_{\Do},\id_{F}), \quad
(\sigma_0, \sigma)(\eta_0, \eta)\sim (\uno_{\Do}, \id_{G}).$$
Consequently,
the object $\eta_0$ is invertible in $\Do$, that is, there exists an object
$\overline{\eta_0}\in\Do$ such that $\eta_0\ot\overline{\eta_0}\simeq \uno_{\Do}\simeq\overline{\eta_0}\ot\eta_0$.

\subsection{Hopf biGalois objects}\label{subsection:hopfbg}

 Let $H, L$ be finite-dimensional Hopf algebras. An $(H,L)$-\emph{biGalois object}
 \cite{S2}, is an algebra $A$  that  is a
left $H$-Galois extension and a right $L$-Galois extension  of the base field $\ku$  such that
the two comodule structures make it an $(H,L)$-bicomodule. Two biGalois
objects are isomorphic if there exists a bijective bicomodule morphism that is
also an algebra map. Any  $(H,L)$-biGalois object $A$ can be regarded as a left $H\otk L^{\cop}$-comodule algebra.
It follows from \cite[Corollary 8.3.10]{Mo}  that any biGalois object is  $H\otk L^{\cop}$-simple
as a left $H\otk L^{\cop}$-comodule algebra.
\medbreak

Denote by $\biga(H)$ the set of isomorphism classes of $(H,H)$-biGalois objects. It is
a group with product given by the cotensor product $\Box_H$.

\medbreak

If $A$ is an $(H,L)$-biGalois object then the functor
 \begin{equation}\label{monoidal-eq-hopf} \Fc_A: \Comod(L)\to  \Comod(H), \quad \Fc_A(X)= A\Box_L X,
 \end{equation}
for all $X\in  \Comod(L)$, has a tensor structure as follows.
If $X, Y\in \Comod(L)$ then $ \xi^A_{X,Y}:(A\Box_L X)\otk (A\Box_L Y)\to A\Box_L ( X\otk Y)$ is defined by
\begin{equation}\label{iso-gal}
\;  \xi^A_{X,Y}(a_i \ot x_i \ot b_j\ot y_j)
= a_i b_j \ot x_i\ot y_j,
\end{equation}
for any $a_i \ot x_i \in  A\Box_L X, $ $b_j\ot y_j \in A\Box_L Y$. If $A,B$ are $(H,L)$-biGalois
objects then there is a natural monoidal isomorphism between the
tensor functors $\Fc_A, \Fc_B$ if and only if $A\simeq B$ as biGalois objects.

Assume that $A$ is a $H$-biGalois object with
left $H$-comodule structure $\lambda:A\to H\otk A$. If $g\in G(H)$ is a group-like
element we can define a new  $H$-biGalois object  $A^g$ on
the same underlying algebra $A$ with unchanged right comodule structure
and a new left $H$-comodule structure given by  $\lambda^g: A^g \to H\otk A^g$,
$\lambda^g(a)=g^{-1}a\_{-1}g\ot a\_0$
for all $a\in A$.
\medbreak

Recall \cite{FMM} that  two $H$-biGalois objects $A, B$ are \emph{equivalent},
 and denote it by $A\sim
B$ if there exists an element $g\in G(H)$ such that $A^g\simeq B$ as biGalois objects.
The subgroup of $\biga(H)$ consisting of $H$-biGalois objects equivalent to $H$ is
denoted by $\inbi(H)$. This group is a normal subgroup of $\biga(H)$. We denote $\outb(H)=\biga(H)/\inbi(H)$.

\begin{teo}\cite[Thm. 4.5]{FMM}\label{pseudo-nat-e} Let $A, B\in \biga(H)$. The following statements are equivalent.
\begin{enumerate}
 \item[1.] $A\sim B$;

\item[2.]  there exists a pseudo-natural isomorphism $(\eta_0, \eta):\Fc_A \to \Fc_B$.

\end{enumerate}\qed
\end{teo}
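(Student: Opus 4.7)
The plan is to establish both implications separately, using the biGalois isomorphism $A^g\simeq B$ to convert between twisted left comodule structures on $A$ and a pseudo-natural isomorphism based at the invertible object $\ku_g\in\Comod(H)$.

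For the implication $(1)\Rightarrow (2)$, fix $g\in G(H)$ together with a biGalois isomorphism $\phi:A^g\to B$ and set $\eta_0:=\ku_g$. The natural candidate for $\eta$ is
\begin{equation*}
\eta_X\bigl((a_i\ot x_i)\ot w_g\bigr)\,=\,w_g\ot (\phi(a_i)\ot x_i),\qquad X\in\Comod(H).
\end{equation*}
I would verify four things: the image lies in $\ku_g\ot \Fc_B(X)$ because $\phi$ is a right $H$-comodule map; $\eta_X$ is left $H$-colinear because the defining identity $\lambda_B\circ\phi=(\id\ot\phi)\lambda^g$ produces $g\cdot g^{-1}a_{(-1)}g=a_{(-1)}g$ on the coaction side; naturality in $X$ is automatic from the formula; and the monoidal coherence \eqref{pseudo-nat-m} reduces, once one inserts the explicit form \eqref{iso-gal} of $\xi^A$ and $\xi^B$, precisely to the multiplicativity of $\phi$.

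For the converse $(2)\Rightarrow (1)$, the object $\eta_0$ must be invertible in $\Comod(H)$; since $H$ is finite-dimensional, every invertible $H$-comodule is one-dimensional and hence isomorphic to $\ku_g$ for a unique $g\in G(H)$. Replacing the pseudo-natural transformation by an equivalent one I may assume $\eta_0=\ku_g$. A direct computation of the left coaction on $\ku_{g^{-1}}\ot(A\Box_H X)\ot\ku_g$ shows that conjugation by $\ku_g$ carries $\Fc_A$ to a tensor functor naturally isomorphic, as a tensor functor, to $\Fc_{A^g}$, the twist $g^{-1}(-)g$ being exactly the coaction defining $A^g$. Combining this identification with the given $\eta_X$ yields a monoidal natural isomorphism $\Fc_{A^g}\Rightarrow \Fc_B$, and the classification statement recalled right before the theorem, namely that a monoidal natural iso $\Fc_A\simeq \Fc_B$ forces $A\simeq B$ as biGalois objects, then gives $A^g\simeq B$, i.e. $A\sim B$.

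The main obstacle is the bookkeeping in $(2)\Rightarrow (1)$: producing an honest monoidal natural isomorphism from a pseudo-natural one by absorbing $\eta_0=\ku_g$, and matching its tensor structure with that of $\Fc_{A^g}$ so that the classification of tensor functors by biGalois objects can be cleanly applied. The verification of \eqref{pseudo-nat-m} in $(1)\Rightarrow (2)$ is also somewhat delicate, but once one tracks how the $w_g$ factor moves past the multiplication maps $\xi^A,\xi^B$ it collapses to the equality $\phi(ab)=\phi(a)\phi(b)$.
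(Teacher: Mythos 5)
Your argument is correct and follows essentially the route the paper itself records: the paper does not prove this theorem (it is imported from \cite{FMM}), but your construction for $(1)\Rightarrow(2)$ is exactly the one written out in Remark \ref{pseudo=iso-comod}, and your $(2)\Rightarrow(1)$ — reduce $\eta_0$ to some $\ku_g$, absorb it by conjugation to identify the conjugated functor with $\Fc_{A^g}$, and invoke the classification of monoidal natural isomorphisms $\Fc_{A^g}\simeq\Fc_B$ by isomorphisms of biGalois objects stated just before the theorem — is the standard argument behind the paper's assertion that every pseudo-natural isomorphism arises this way. The only caveat is that the tensor-structure bookkeeping you defer is genuinely where the content of the converse lies, but the outline is sound.
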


\begin{rmk}\label{pseudo=iso-comod} Given an isomorphism $f:A^g\to B$ of bicomodule algebras,
 there is an associated pseudo-natural isomorphism
$( \eta_0, \eta^f):\Fc_A \to \Fc_B$, given by
$$\eta_0=\ku_g,\quad \eta^f_V:A\Box_H V\otk \ku_g \to \ku_g \otk B\Box_H V, $$
$$ \eta^f_V(a\ot v \ot r)=r\ot f(a)\ot v,$$
for all $a\ot v \ot r \in A\Box_H V\otk \ku_g$. Moreover, any pseudo-natural isomorphism
is of this form.
\end{rmk}

\subsection{Comodule algebras over graded Hopf algebras}
One of the goals of the paper is the classification of biGalois objects over a certain family of Hopf algebras. 
Since biGalois objects are in particular comodule algebras, we first recall some tools developed in \cite{M1} to 
study simple comodule algebras over coradically graded Hopf algebras.
\medbreak

Let  $H=\oplus_{i=0}^m H(i)$ be a coradically graded finite-dimensional Hopf algebra.
We shall also assume that $H$ is pointed; the coradical is a group algebra $H_0=\ku G$ of a finite
group $G$.
\medbreak

If $A$ is  right $H$-simple then $A_0$ is right $\ku G$-simple, \cite[Prop. 4.4]{M1}, thus
there exists a subgroup $F\subseteq G$ and a 2-cocycle $\psi\in Z^2(F,\ku^\times)$ such
that $A_0=\ku_\psi F.$ The next result is \cite[Lemma 5.4]{M2}.

\begin{lema} If $A$ is right $H$-simple there exists a  2-cocycle $\widehat{\psi}\in Z^2(G,\ku^\times)$
such that $\widehat{\psi} $ restricted to $F$ equals $\psi$ and $(\gr A)_{\sigma_{\widehat{\psi}}}$ is isomorphic to a homogeneous left
coideal subalgebra of $H^{[\sigma_{\widehat{\psi}}]}$ as a left
$H^{[\sigma_{\widehat{\psi}}]}$-comodule algebras.\qed
\end{lema}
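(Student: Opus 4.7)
The plan is to extend the $2$-cocycle $\psi$ from $F$ to $G$ and then twist the comodule algebra so as to reduce to the situation in which the coradical is an honest subgroup algebra, at which point the desired embedding follows from earlier structure results of \cite{M1}.

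First, I would construct $\widehat{\psi}\in Z^2(G,\ku^\times)$ whose restriction to $F\times F$ equals $\psi$. This is a purely group-cohomological step: since $\ku^\times$ is a divisible, hence injective, abelian group, any $2$-cocycle on $F$ lifts to a $2$-cocycle on $G$ at the level of cochains, even though the corresponding cohomology classes need not lift. Concretely one fixes a set-theoretic transversal $\{g_i\}$ of $F$ in $G$ with $g_1=e$, extends $\psi$ by arbitrary choices on pairs of representatives, propagates via the cocycle identity and $F$-bilinearity, and finally normalizes by a coboundary so that the conditions \eqref{2-cocycl} hold on all of $G$.

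Next I would perform the twist. The Hopf $2$-cocycle $\sigma_{\widehat{\psi}}$ defined in \eqref{ltwist-def} is concentrated in degree zero, so $H^{[\sigma_{\widehat{\psi}}]}$ shares the coalgebra structure of $H$ and in particular the same coradical $\ku G$. Twisting $\gr A$ via \eqref{sigma-product} therefore affects only the product on $(\gr A)_0=\ku_\psi F$, where it multiplies $f\cdot_\sigma f'$ by $\widehat{\psi}(f,f')=\psi(f,f')$; this cancels the original cocycle and leaves the plain group algebra $\ku F$. Hence $(\gr A)_{\sigma_{\widehat{\psi}}}$ is a graded $H^{[\sigma_{\widehat{\psi}}]}$-simple comodule algebra whose degree-zero piece is the untwisted subgroup algebra $\ku F\subseteq \ku G$.

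Finally I would invoke \cite[Prop.~4.4]{M1}, or more precisely its graded avatar, which identifies graded $H'$-simple comodule algebras whose coradical is a subgroup algebra of $G(H')$ with homogeneous coideal subalgebras of $H'$, compatibly with the coaction. Applied to $H'=H^{[\sigma_{\widehat{\psi}}]}$, this yields the desired embedding of $(\gr A)_{\sigma_{\widehat{\psi}}}$ as a homogeneous left coideal subalgebra. The main obstacle is the cocycle-extension step for $\psi$: without divisibility of $\ku^\times$ the extension can genuinely fail, so the argument must either carry out the transversal construction explicitly or appeal to a general cohomological extension lemma; the twisting computation and the appeal to \cite{M1} are then essentially bookkeeping.
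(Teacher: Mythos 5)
The paper offers no proof of this lemma---it is imported verbatim from \cite[Lemma 5.4]{M2} and stamped with a \emph{qed}---so your argument has to stand on its own, and its first step does not. The claim that divisibility (injectivity) of $\ku^\times$ guarantees that a $2$-cocycle on $F$ extends to a $2$-cocycle on $G$ is false: injectivity of the coefficient group does not make the restriction map $H^2(G,\ku^\times)\to H^2(F,\ku^\times)$ surjective, and an on-the-nose cochain-level extension exists if and only if $[\psi]$ lies in the image of restriction (if $\widehat{\psi}|_{F\times F}=\psi$ then trivially $\operatorname{res}[\widehat{\psi}]=[\psi]$; conversely, if $[\psi]$ is in the image one corrects a preimage by a coboundary). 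So your parenthetical ``even though the corresponding cohomology classes need not lift'' is exactly backwards. For a concrete failure take $G=C_4\times C_4=\langle a\rangle\times\langle b\rangle$ and $F=\langle a^2,b^2\rangle\cong C_2\times C_2$: classes are detected by alternating bicharacters, the nontrivial class on $F$ has $\beta(a^2,b^2)=-1$, while any class on $G$ satisfies $\beta(a^2,b^2)=\beta(a,b)^4=1$; hence restriction is the zero map onto a nonzero group and the nontrivial $\psi$ extends to no cocycle on $G$. Your ``choose a transversal and propagate via the cocycle identity'' recipe silently assumes the identity can be satisfied consistently, which is precisely where the obstruction lives. Note also that $\ku_\psi F$ for such a $\psi$ is a perfectly good $\ku G$-simple comodule algebra, so $H$-simplicity alone does not obviously rescue the extension step in the generality in which the lemma is phrased; whatever argument \cite{M2} uses must exploit more structure. (In the only situation where the present paper applies the lemma, the ambient group is $G\times G$ and $F$ is the graph of an automorphism of $G$, in which case the extension is immediate: pull $\psi$ back along the second projection $G\times G\to G$.)

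The remaining two steps are correct in outline and are the standard route: $\sigma_{\widehat{\psi}}$ is concentrated in degree zero, so the twist preserves the coalgebra, the coradical and the Loewy filtration, and converts $(\gr A)_0=\ku_\psi F$ into the untwisted group algebra $\ku F$; one then invokes the structure results of \cite{M1} for graded $H$-simple comodule algebras whose degree-zero part is a group algebra to realize $(\gr A)_{\sigma_{\widehat{\psi}}}$ as a homogeneous left coideal subalgebra of $H^{[\sigma_{\widehat{\psi}}]}$. But the proof as written does not close: the cocycle-extension step is the one non-formal ingredient of the lemma, and the justification you give for it is not merely incomplete but wrong.
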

Recall that the Hopf 2-cocycle $\sigma_{\widehat{\psi}}$ was defined in \eqref{ltwist-def}.

\section{Finite supergroup algebras}\label{spa}

Let $G$ be a finite Abelian group, $u\in G$ be an element of order 2 and $V$ a finite-dimensional
$G$-module such that $u\cdot v=-v$  for all $v\in V$.
The space $V$ has a Yetter-Drinfeld module structure
over $\ku G$ as follows. The $G$-comodule structure $\delta:V\to \ku G\otk V$ is given by
$\delta(v)=u\ot v$, for all $v\in V$.  The Nichols algebra of $V$ is the exterior algebra $\nic(V)=\wedge(V)$.
The bosonization $\wedge(V)\# \ku G$ is called in \cite{AEG} a \emph{finite supergroup algebra}
and it is denoted by $\Ac(V,u,G)$. Hereafter we shall denote
the element $v\# g$ simply by $vg$, for all $v\in V, g\in G$.

The algebra $\Ac(V,u,G)$ is generated by elements $v\in V, g\in G$ subject to relations
$$vw+wv=0, \quad gv= (g\cdot v) g, \text{ for all } v,w\in V, g\in G.$$
The coproduct and antipode are determined for all $v\in V, g\in G$ by
\begin{align*}
\Delta(v)&=v\ot 1+ u\ot v,& \Delta(g)&=g\ot g,&
\Ss(v)&=-u v,& \Ss(g)&=g^{-1}.\end{align*}

Let us explain the coproduct in a more explicit form. Suppose $\{v_1,\ldots,v_k\}$ is a basis of $V$.
Let  $t\in \Na$, and define
$$\ele_t=\{(1,\ldots,t),(t,1,\ldots,t-1),(t-1,t,1,\ldots,t-2),\ldots,(2,3,\ldots,t,1)\}\subset \mathbb{N}^{t}.$$ 
The coproduct of  $\Ac(V,u,G)$ on the element $v_1\cdots v_t g$ of the canonical basis equals
\begin{align}\label{coprod1}
\begin{split}
& v_1\cdots v_t g\ot g+u^tg\ot v_1\cdots v_t g+
\sum_{(i_1,\ldots,i_t)\in\ele_t}v_{i_1}\cdots v_{i_{t-1}}ug\ot v_{i_t}g + \\
&\sum_{(i_1,\ldots,i_t)\in\ele_t}v_{i_1}\cdots v_{i_{t-2}}u^2g\ot v_{i_{t-1}}v_{i_t}g+
\cdots+
\sum_{(i_1,\ldots,i_t)\in\ele_t}v_{i_1}u^{t-1}g\ot v_{i_2}\cdots v_{i_{t}}g.
\end{split}
\end{align}

\begin{lema}\label{iso-cop} The algebra map
$\phi: \Ac(V,u,G)\to \Ac(V,u,G)^{\cop}$ determined by
$$\phi(v)=vu, \quad \phi(g)=g,$$
is a Hopf algebra isomorphism.\qed
\end{lema}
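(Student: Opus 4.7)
The plan is to verify directly that $\phi$, defined on the generators $v\in V$ and $g\in G$ of $\Ac(V,u,G)$, respects the defining relations, intertwines comultiplication with opposite comultiplication, and is invertible. Because a bijective bialgebra map between Hopf algebras automatically commutes with antipodes, the antipode axiom does not require a separate argument.

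First I would check that $\phi$ is a well-defined algebra map. For the quadratic relation in $V$, the computation reduces to $(vu)(wu)+(wu)(vu)=-(vw+wv)u^{2}$, which vanishes because $uw=-wu$ in the bosonization (since $u\cdot w=-w$) and $u^{2}=1$ in $G$. For the cross relation $gv=(g\cdot v)g$, I would rewrite both $\phi(g)\phi(v)=(gv)u$ and $\phi((g\cdot v)g)=(g\cdot v)(ug)$, and note that $ug=gu$ since $G$ is abelian, so the two expressions coincide.

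Next I would verify that $\phi$ is a coalgebra map into $\Ac(V,u,G)^{\cop}$. It suffices to check the condition on the algebra generators, since both $\Delta$ and $\Delta^{\cop}$ are algebra maps and $\phi$ is already known to be multiplicative. For $g\in G$ the identity $\Delta^{\cop}(g)=g\ot g=(\phi\ot\phi)\Delta(g)$ is immediate. For $v\in V$, using $\Delta(vu)=\Delta(v)\Delta(u)=vu\ot u+1\ot vu$, one computes $\Delta^{\cop}(\phi(v))=u\ot vu+vu\ot 1=(\phi\ot\phi)\Delta(v)$. This is the only computational step and is quite short.

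Finally, I would observe that $\phi$ is an involution: $\phi^{2}(v)=\phi(vu)=(vu)u=vu^{2}=v$ and $\phi^{2}(g)=g$. Hence $\phi$ is bijective, and combining the previous steps we conclude that $\phi$ is a bialgebra isomorphism, thus a Hopf algebra isomorphism. I do not anticipate any genuine obstacle; the mild subtlety to keep in mind is the sign arising from $uv=-vu$, which must be tracked carefully in the verification of the algebra relations and of the coproduct identity on $V$.
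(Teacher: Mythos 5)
Your verification is correct and is exactly the routine generator-by-generator check that the paper omits (the lemma is stated with its proof suppressed as straightforward): the sign bookkeeping via $uw=-wu$ and $u^2=1$, the coproduct computation $\Delta(vu)=vu\ot u+1\ot vu$, and the observation that $\phi$ is an involution are all accurate, and invoking the automatic compatibility of a bialgebra map with antipodes is legitimate. Nothing further is needed.
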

Next, we shall compute the projective covers of simple $\Ac(V,u,G)$-comodules.
For any $g\in G$, $\ku_g$ is a simple $\Ac(V,u,G)$-comodule. Let $P_g=\wedge(V)\otk \ku_g$ 
be the left $\Ac(V,u,G)$-comodule with coaction
determined by the restriction of the coproduct.

\begin{teo}\label{th:proy-super} Let $\{v_1,\ldots,v_k\}$ be a basis of $V$. The following assertions
hold.
\begin{itemize}
\item[1.] The family $\{\ku_g: g\in G\}$ is a complete set of isomorphism classes of simple
$\Ac(V,u,G)$-comodules.
\item[2.]  The projective cover of the comodule $\ku_{u^kg}$ is $P_g$.
\item[3.]  For all $g,h\in G$, $\ku_g\ot \ku_h\simeq \ku_{gh}$ and $P_g\ot \ku_h\simeq P_{gh}$ as
$\Ac(V,u,G)$-comodules.\end{itemize}
\end{teo}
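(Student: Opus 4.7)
My plan is to dispatch (1) and (3) quickly by direct inspection and to concentrate on (2). For (1), $H=\Ac(V,u,G)$ is a pointed Hopf algebra with coradical $\ku G$, so its simple comodules coincide with those of $\ku G$; since $G$ is Abelian these are exactly the one-dimensional $\ku_g$, $g\in G$. For (3), I would first observe $\rho(w_g\ot w_h)=gh\ot w_g\ot w_h$, yielding $\ku_g\ot\ku_h\simeq \ku_{gh}$; then exhibit the $\ku$-linear map $v_I w_g\ot w_h\mapsto v_Iw_{gh}$ as an isomorphism $P_g\ot\ku_h\to P_{gh}$, the intertwining property following from \eqref{coprod1}: $\Delta(v_I gh)$ is obtained from $\Delta(v_I g)$ by right-multiplying the second tensor factor by $h$, which matches the tensor product coaction exactly.

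The core of the theorem is (2). My strategy is to realize $P_g$ as an indecomposable projective summand of the regular comodule and to identify its top by means of an explicit subcomodule filtration. The coproduct formula \eqref{coprod1} shows that the second tensor factor of $\Delta(v_I g)$ always has group part $g$, so $P_g\simeq\wedge(V)g\subseteq H$ as left $H$-comodules and $H=\bigoplus_{g\in G}P_g$. Because $H^*$ is Frobenius, the regular comodule $H$ is simultaneously injective and projective, and hence so is each $P_g$. I would then introduce the ascending filtration $F^t=\mathrm{span}\{v_I w_g:|I|\le t\}$ inside $P_g$; by \eqref{coprod1} each $F^t$ is a subcomodule, since the second tensor factor of $\Delta(v_I g)$ always has $v$-degree at most $|I|$.

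Two facts are then extracted from the filtration. First, the coradical (i.e., the socle) of $P_g$ equals $F^0=\ku_g$: reading off \eqref{coprod1}, the only $H_0$-component of $\rho(v_Iw_g)$ is $u^{|I|}g\ot v_Iw_g$, while the unique top first-factor-degree component is $v_Ig\ot w_g$. For $x=\sum_I c_Iv_Iw_g$, requiring $\rho(x)\in H_0\otk P_g$ forces the coefficient of $v_Ig\ot w_g$ to vanish, hence $c_I=0$ for $|I|\ge 1$. Having simple socle forces $P_g$ to be indecomposable. Second, $P_g/F^{k-1}$ is one-dimensional, spanned by $v_1\cdots v_kw_g$, and the only term of $\rho(v_1\cdots v_k w_g)$ that survives modulo $F^{k-1}$ is $u^kg\ot v_1\cdots v_kw_g$; so $P_g/F^{k-1}\simeq\ku_{u^kg}$. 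Combining: $P_g$ is an indecomposable projective admitting a surjection onto the simple $\ku_{u^k g}$, and is therefore its projective cover.

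The main obstacle will be the careful bookkeeping with \eqref{coprod1}: one must track bidegrees of every term of $\Delta(v_I g)$ (the $\wedge(V)$-degree on both tensor factors) both to verify that $F^\bullet$ is a subcomodule filtration and to identify exactly which summands survive when pulling back to $H_0\otk P_g$ and when quotienting by $F^{k-1}$. Once this bidegree analysis is in place, the conclusion follows from the standard characterization of the projective cover as the unique (up to isomorphism) indecomposable projective that surjects onto a given simple.
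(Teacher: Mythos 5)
Your proposal is correct, and parts (1) and (3) coincide with the paper's treatment, but your argument for the key part (2) takes a genuinely different route. The paper works ``from the top'': it defines the explicit comodule epimorphism $p_g:P_g\to \ku_{u^kg}$ sending $v_1\cdots v_k g\mapsto w_{u^kg}$ and proves directly that it is essential, the crux being that (by the bidegree analysis of \eqref{coprod1}) the smallest subcomodule of $P_g$ containing any element of the form $z+\alpha\,v_1\cdots v_k\ot g$ with $z\in\ker(p_g)$, $\alpha\neq 0$, is all of $P_g$; i.e.\ $P_g$ is generated as a comodule by its top-degree element modulo $\ker(p_g)$. You instead work ``from the bottom'': you compute that $\rho^{-1}(H_0\otk P_g)=\ku_g$, conclude that $P_g$ has simple socle and is therefore indecomposable, exhibit $\ku_{u^kg}$ as the top quotient $P_g/F^{k-1}$, and invoke the standard fact that an indecomposable projective surjecting onto a simple is its projective cover. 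Both arguments rest on exactly the same bookkeeping with \eqref{coprod1} (tracking the extreme bidegree components $v_Ig\ot g$ and $u^{|I|}g\ot v_Ig$); the paper's version is more self-contained, needing only the definition of an essential epimorphism, while yours outsources the finish to general finite-dimensional representation theory and in exchange yields extra structural information for free, namely the indecomposability of $P_g$ and the identification $\mathrm{soc}(P_g)\simeq\ku_g$ alongside $\mathrm{top}(P_g)\simeq\ku_{u^kg}$. Your justification of projectivity (direct summand of the regular comodule, which is projective since $H^*$ is Frobenius) is a slightly more careful version of the paper's one-line assertion.
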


\pf Since $\Ac(V,u,G)$ is pointed, every simple comodule is one-dimensional
 and they come from group-like elements of  $\Ac(V,u,G)$. This proves (1).

Since $\Ac(V,u,G)=\oplus_{g\in G}P_g $, as left  $\Ac(V,u,G)$-comodules, 
$P_g$ is a projective comodule for any $g\in G$.
\medbreak

Let $p_g:P_g\to \ku_{u^kg}$ be the $\Ac(V,u,G)$-comodule epimorphism, given on the elements of
the canonical basis by
\begin{equation}\label{proj-cov}
 p_g(x)=\begin{cases}
w_{u^kg} & \text{ if } x=v_1\ldots v_k g,\\
0 & \text{  elsewhere.}
\end{cases}
\end{equation}

Let us prove that this projection is essential.
Let $L$ be any $\Ac(V,u,G)$-comodule together with  a comodule morphism
 $\psi:L\to P_g$ such that $p_g\circ\psi$ is an epimorphism.
Let $y\in L$ such that $p_g\circ\psi(y)=w_{u^kg}$,
then $\psi(y)=z+\alpha\; v_1\ldots v_k\ot g $ for some $z\in$ ker$(p_g)$ and
$0\neq \alpha\in \ku$.

Note that $P_g$ is the smallest subcomodule containing $z+\alpha\; v_1\ldots v_k\ot g $.  Indeed, if
$P$ is a left subcomodule of $\Ac(V,u,G)$ such that $z+\alpha\; v_1\ldots v_k\ot g \in P$, then
using the explicit description of the coproduct given by formula \eqref{coprod1},  and the fact that $z\in$ ker$(p_g)$,
one can verify that
any element of the canonical basis of $P_g$ belongs to $P$.
Since the
image of $\psi$ is a subcomodule containing $z+\alpha\; v_1\ldots v_k\ot g $,  it must be all
$P_g$. Hence $\psi$ is surjective and the map $p_g$ is essential.
We conclude that $P_g$ is the projective cover of the comodule $\ku_{u^kg}$.

Finally, for $g,h\in G$, let $\gamma:\ku_g\ot \ku_h\to \ku_{gh}$ and $\beta:P_g\ot \ku_h\to P_{gh}$ be the maps
$$\gamma(w_g\ot w_h)=w_{gh},\quad \beta(v\ot g\ot w_h)=v\ot gh$$ for all
$v\in V$. Clearly $\gamma$ and $\beta$ are comodule isomorphisms.
\epf

The following result will be needed when computing the Frobenius-Perron dimension
of certain tensor categories.
\begin{cor}\label{grot} Assume $\dim(V)=2$. For any $g\in G$ we have
$$\langle P_g\rangle=2\langle \ku_g\rangle+2\langle \ku_{ug}\rangle.$$
Here $\langle P_g\rangle$ denotes the class of $P_g$ in
the Grothendieck group of the category of finite-dimensional left
$\Ac(V,u,G)$-comodules.
\end{cor}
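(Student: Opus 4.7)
My plan is to give an explicit subcomodule filtration of $P_g$ and read off the composition factors from the restricted coproduct. Since $\dim V=2$, a basis $\{v_1,v_2\}$ of $V$ gives the four-element basis $\{g, v_1g, v_2g, v_1v_2g\}$ of $P_g\subset \Ac(V,u,G)$, and by Theorem \ref{th:proy-super}(1) every composition factor must be some $\ku_h$ with $h\in G$. I would use the chain $0\subset N_0\subset N_1\subset P_g$, with $N_0=\ku g$ and $N_1=\ku g+\ku v_1g+\ku v_2g$.

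That $N_0$ and $N_1$ are subcomodules, and the identification of the bottom two layers, are immediate from the formulas $\Delta(g)=g\ot g$ and $\Delta(v_ig)=v_ig\ot g+ug\ot v_ig$ provided by \eqref{coprod1}: one gets $N_0\simeq \ku_g$ and, reducing the second formula modulo $N_0$, the induced coaction $\overline{v_ig}\mapsto ug\ot \overline{v_ig}$, whence $N_1/N_0\simeq \ku_{ug}\oplus \ku_{ug}$.

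The top quotient requires expanding $\Delta(v_1v_2g)$; using multiplicativity of $\Delta$, the relation $uv_i=-v_iu$, and $\Delta(g)=g\ot g$, one obtains
$$\Delta(v_1v_2g)= v_1v_2g\ot g+g\ot v_1v_2g+v_1ug\ot v_2g-v_2ug\ot v_1g.$$
Exactly one of these four terms, namely $g\ot v_1v_2g$, has right tensor factor outside $N_1$, so the induced coaction on $\overline{v_1v_2g}\in P_g/N_1$ is $g\ot\overline{v_1v_2g}$, giving $P_g/N_1\simeq \ku_g$. Summing the four factors yields $\langle P_g\rangle=2\langle \ku_g\rangle+2\langle \ku_{ug}\rangle$; the only step that is not essentially automatic is this coproduct computation, so that is where I would be most careful.
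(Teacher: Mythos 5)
Your proof is correct and follows essentially the same route as the paper: both arguments exhibit the explicit filtration of $P_g$ by the subcomodules spanned by $\{g\}$, $\{g,v_1g,v_2g\}$ (which the paper refines into two one-dimensional steps via kernels of successive epimorphisms onto $\ku_{ug}$), and read off the composition factors $\ku_g,\ku_{ug},\ku_{ug},\ku_g$ from the restricted coproduct. Your computation of $\Delta(v_1v_2g)$, including the sign on $v_2ug\ot v_1g$, is right and is the only nontrivial step, exactly as you say.
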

\pf Let $\{v,w\}$ be a basis of $V$. Recall the projection $p_g:P_g\to \ku_{g}$ described in
\eqref{proj-cov}. Since in this case $P_g$ is generated as a vector space by
$\{vw\ot g, v\ot g, w\ot g, 1\ot g\}$, the kernel of $p_g$ is generated as a vector space by
$\{ v\ot g, w\ot g, 1\ot g\}$.
Define $f:\ker(p_g)\to \ku_{ug}$  the $\Ac(V,u,G)$-comodule epimorphism  by
$$f(x)=\begin{cases}
w_{ug} & \text{ if } x=w\ot g,\\
0 & \text{ elsewhere.}\end{cases}$$

Let $f_1:\ker(f)\to \ku_{ug}$ be the $\Ac(V,u,G)$-comodule epimorphism given by
$$f_1(x)=\begin{cases}
w_{ug} & \text{ if } x=v\ot g,\\
0 & \text{ elsewhere.}\end{cases}$$

We have a composition series for $P_g$  given by
$$P_g\supseteq \ker(p_g)\supseteq \ker (f) \supseteq \ker(f_1)\supseteq 0,$$ and satisfies
\begin{align*}
P_g/\ker(p_g)&\simeq \ku_g,& \ker(p_g)/\ker(f)&\simeq \ku_{ug},\\
\ker(f)/\ker(f_1)&\simeq \ku_{ug},& \ker(f_1)&\simeq \ku_g.\end{align*}

\epf

\subsection{The tensor product $\Ac(V,u,G)\otk\Ac(V,u,G)^{\cop}$}\label{tp-spa} Let  $G_1, G_2$ be  finite 
Abelian groups and
$u_i\in G_i$ be central elements of order 2. For $i=1,2$ let $V_i$ be finite-dimensional
$G_i$-modules,  such that $u_i$ acts in $V_i$ as $-1$.

Define  $\Ac(V_1,V_2,u_1,u_2,G_1,G_2)=\Ac(V_1,u_1,G_1)\otk \Ac(V_2,u_2,G_2)$
with the tensor product Hopf algebra structure. For simplicity, we shall denote
$$\Bc(V,u,G)=\Ac(V,V,u,u,G,G).$$
Observe that $\Bc(V,u,G)$ is a  coradically graded Hopf algebra.

If we denote $D=G_1\times G_2$, then both vector spaces $V_1, V_2$ are $D$-modules by setting
$$ (g,h)\cdot v_1= g\cdot v_1, \quad   (g,h)\cdot v_2= h\cdot v_2, \quad (g,h)\in D, v_i \in V_i;i=1,2.$$
The algebra $\Ac(V_1,V_2,u_1,u_2,G_1,G_2)$
is generated by elements
$ V_1, V_2, D$
subject to relations
$$  v_1w_1+ w_1v_1=0,\;\;  v_2w_2+ w_2v_2=0,\; \; v_1v_2=v_2v_1,$$
$$g v_1=  (g\cdot v_1) g,\quad   gv_2= (g\cdot v_2) g,$$
for all $g\in D$, $v_i, w_i\in V_i$, $i=1,2$. The Hopf algebra structure
is determined for all $(g_1,g_2)\in G$, $v_i\in V_i$, $i=1,2$ by
$$ \Delta(v_1)= v_1\ot 1+ (u_1,1)\ot v_1, \;\;\; \Delta(v_2)=v_2\ot 1+ (1,u_2)\ot v_2,$$
$$ \Delta(g_1,g_2)=(g_1,g_2)\ot (g_1,g_2).$$

\medbreak

We shall define certain families of Hopf algebras that are cocycle deformations of
$\Bc(V,u,G)$. Let $(V_1,V_2,u_1,u_2,G_1,G_2)$ be a data as
above. Set $V=V_1\oplus V_2$. Define
$\Hc(V_1,V_2,u_1,u_2,G_1,G_2)= \wedge(V)\otk \ku D$ with product
determined by
$$vw+wv=0, \quad g v= (g\cdot v) g, \text{ for any }\,  v,w\in V_1\oplus V_2, g\in D,$$
and coproduct determined by
$$ \Delta(v_1)= v_1\ot 1+ (u_1,1)\ot v_1, \;\;\; \Delta(v_2)=v_2\ot 1+ (1,u_2)\ot v_2,$$
 for any $v_i\in V_i$, $i=1,2$.

\begin{lema}\label{twisting-supergroup}\cite[Prop. 6.2]{M2} Let be $H= \Ac(V_1,V_2,u_1,u_2,G_1,G_2)$,
$\psi\in Z^2(D,\ku^{\times})$ and $\sigma_\psi:H
\otk H\to \ku$ the Hopf 2-cocyle defined in \eqref{ltwist-def}. Denote
$$\xi=\psi((u_1,1),(1,u_2))\psi((1,u_2),(u_1,1))^{-1}.$$ Then
\begin{itemize}
 \item[(i)] if $\xi=1$ we have $H^{[\sigma_\psi]}\simeq \Ac(V_1,V_2,u_1,u_2,G_1,G_2)$;
 \item[(ii)] if $\xi=-1$ then $H^{[\sigma_\psi]}\simeq \Hc(V_1,V_2,u_1,u_2,G_1,G_2)$.
\end{itemize}\qed
\end{lema}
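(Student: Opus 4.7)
My strategy is direct computation on generators. Since Hopf cocycle twisting preserves the coalgebra, the underlying coalgebras of $H^{[\sigma_\psi]}$, $\Ac(V_1,V_2,u_1,u_2,G_1,G_2)$ and $\Hc(V_1,V_2,u_1,u_2,G_1,G_2)$ already coincide, so the candidate isomorphism is the coalgebra identity (possibly combined with a scalar rescaling of the $V$-generators). Only the algebra structure must be matched, and both targets are bosonizations of the form $\nic(V_1\oplus V_2)\# \ku D$, so it will suffice to recognize the Nichols algebra sitting in $H^{[\sigma_\psi]}$ in degrees $\le 1$.

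The crucial observation is that by \eqref{ltwist-def} the Hopf 2-cocycle $\sigma_\psi$ vanishes outside $\ku D\otk\ku D$, so in \eqref{product-twisted} only the summands of $\Delta^{(2)}$ whose outer factors are group-like can contribute. Expanding $\Delta^{(2)}$ via \eqref{coprod1}, I would derive, for $g,h\in D$ and $v,w\in V_1\oplus V_2$ with $D$-degrees $u_v,u_w\in\{(u_1,1),(1,u_2)\}$,
\begin{align*}
g\cdot_{[\sigma_\psi]} h&=gh,& v\cdot_{[\sigma_\psi]} w&=\psi(u_v,u_w)\,vw,\\
g\cdot_{[\sigma_\psi]} v&=\psi(g,u_v)\,gv,& v\cdot_{[\sigma_\psi]} g&=\psi(u_v,g)\,vg.
\end{align*}
Hence $\ku D\subset H^{[\sigma_\psi]}$ remains a Hopf subalgebra; anticommutativity within each $V_i$ persists because $\psi(u_v,u_w)$ is symmetric when $u_v=u_w$; and for $v_1\in V_1,\,v_2\in V_2$ the relation $v_1v_2=v_2v_1$ in $H$ becomes
$$v_1\cdot_{[\sigma_\psi]} v_2=\xi\bigl(v_2\cdot_{[\sigma_\psi]} v_1\bigr).$$
A short computation with the normalization \eqref{2-cocycl} will further show that the conjugation $g\cdot_{[\sigma_\psi]} v\cdot_{[\sigma_\psi]} g^{-1}$ equals $b(g,u_v)\,(g\cdot v)$, where $b(g,h):=\psi(g,h)\psi(h,g)^{-1}$ is the alternating bicharacter of $\psi$; so the degree-$1$ Yetter--Drinfeld module of $H^{[\sigma_\psi]}$ is $V_1\oplus V_2$ with $D$-action twisted by $b$.

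The case analysis then closes the argument. When $\xi=1$ the braiding between $V_1$ and $V_2$ carries no sign, the Nichols algebra is $\wedge(V_1)\otk\wedge(V_2)$ with commuting factors, and the bosonization identifies with $\Ac(V_1,V_2,u_1,u_2,G_1,G_2)$. When $\xi=-1$ the braiding between $V_1$ and $V_2$ acquires a factor of $-1$, the Nichols algebra becomes the full exterior algebra $\wedge(V_1\oplus V_2)$, and the bosonization identifies with $\Hc(V_1,V_2,u_1,u_2,G_1,G_2)$. The step I anticipate as the main obstacle is the bookkeeping in the conjugation calculation and the verification that the resulting bosonization matches the intrinsically defined $\Hc$ (which tacitly uses the $b$-twisted $D$-action on $V_1\oplus V_2$ needed for $\wedge(V_1\oplus V_2)$ to be its Nichols algebra); once that is in hand the identity map on generators provides the sought Hopf algebra isomorphism.
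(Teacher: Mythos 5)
First, a remark on the ground truth: the paper does not prove this lemma at all --- it is quoted from \cite[Prop. 6.2]{M2} with a \verb|\qed| and no argument --- so there is no internal proof to compare yours against. Your computational skeleton is nonetheless the right one: since $\sigma_\psi$ (and its convolution inverse) is supported on $\ku D\otk \ku D$, the twisted products of generators are exactly $g\cdot_{[\sigma_\psi]}h=gh$, $g\cdot_{[\sigma_\psi]}v=\psi(g,u_v)\,gv$, $v\cdot_{[\sigma_\psi]}g=\psi(u_v,g)\,vg$ and $v\cdot_{[\sigma_\psi]}w=\psi(u_v,u_w)\,vw$; the quadratic relations then transform as you say, and the cross-relation between $V_1$ and $V_2$ picks up precisely the factor $\xi$. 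All of that is correct.

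The genuine gap is the step you flag and then dismiss: the assertion that ``the identity map on generators provides the sought Hopf algebra isomorphism'' is false as stated. Your own conjugation computation shows that in $H^{[\sigma_\psi]}$ the commutation relation is $g\cdot_{[\sigma_\psi]}v=b(g,u_v)\,(g\cdot v)\cdot_{[\sigma_\psi]}g$, i.e.\ the $D$-action on $V_i$ is the original one multiplied by the character $b(-,u_{v_i})$, and these characters need not be trivial: already for $D=C_2\times C_2$ and $\psi$ in the nontrivial class, $(1,u_2)$ acts by $-1$ on $V_1$ after twisting but trivially in the action fixed in Section~\ref{tp-spa}. So the identity on generators does not intertwine the $D$-actions, and the identification with $\Ac$ or $\Hc$ requires an actual argument: one must either observe that the coproduct of $\Hc$ is only multiplicative on $v_1v_2+v_2v_1$ for the sign-modified action (so $\Hc$ is implicitly defined with that action) and then show that the $b$-twisted action agrees with it up to an automorphism of $D$ and isomorphisms of the $V_i$, or normalize $\psi$ within its cohomology class. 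Neither is automatic, because for general $G_1,G_2$ the alternating bicharacter $b$ carries strictly more information than the single scalar $\xi$ (e.g.\ the restrictions $b(-,(u_1,1))|_{\{1\}\times G_2}$ for elements other than $u_2$), and proving that this extra information can always be absorbed is exactly the nontrivial content of the lemma that your sketch leaves open. For the case the paper actually uses ($G_1=G_2=C_2$, where the twisted action is forced to coincide with the unique action making $\Hc$ a bialgebra) the remaining verification is short and your parenthetical about $\Hc$ ``tacitly using the twisted action'' is exactly on target; but as written the argument does not establish the statement in the stated generality.
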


\section{The classification of Hopf BiGalois objects over $\Ac(V,u,G)$}\label{spa-hb}

 In this section we shall
 present a classification of BiGalois objects over the supergroup algebras.
The idea to achieve this classification for an arbitrary Hopf algebra $H$ is the following. 
 Any biGalois object over $H$ is an $H\otk H^{\cop}$-simple left  
 $H\otk H^{\cop}$-comodule algebra with trivial coinvariants.  Any such 
 $H\otk H^{\cop}$-comodule algebra is a \emph{lifting} of a 2-cocycle deformation 
 of a homogeneous left coideal subalgebra inside certain 
 a twisting of the Hopf algebra $H\otk H^{\cop}$. Since biGalois objects have dimension equal to the 
 dimension of $H$, we can then detect
the biGalois objects.
 
 \medbreak 
 
Let $G$ be a finite Abelian group, $u\in G$ be an element of order 2 and $V$ a finite-dimensional
$G$-module such that $u\cdot v=-v$  for all $v\in V$. 

First we classify all  $\Ac(V,u,G)\otk \Ac(V,u,G)^{\cop}$-simple
left comodule algebras with trivial coinvariants. Hopf biGalois objects
over $\Ac(V,u,G)$ are inside this family.

\subsection{Simple comodule algebras over $\Bc(V,u,G)$}

We recall the description of all $\Bc(V,u,G)$-simple left comodule algebras
presented in \cite{M2}. \medbreak

For a given finite-dimensional coradically graded Hopf algebra $H$, the idea to classify simple left $H$-comodule algebras, is roughly the
following. If $A$ is a  $H$-simple left comodule algebra the graded algebra $\gr A$,
with respect to the Loewy filtration, is also $H$-simple. A twisting of  $\gr A$, by
a certain Hopf 2-cocycle $\sigma$, is isomorphic to an homogeneous coideal
subalgebra inside $H^{[\sigma]}$. Then, one has to classify homogeneous coideal
subalgebras inside $H^{[\sigma]}$.
At last, one has to compute all
\emph{liftings} of $\gr A$, that is , $H$-comodule algebras $A$ such that $\gr A$ is a
twisting of a coideal subalgebra inside $H^{[\sigma]}$.

\begin{defi} A collection $(W^1,W^2,W^3, \beta, F, \psi)$
is  \emph{compatible  } with the triple $(V,u,G)$ if
\begin{itemize}

 \item $W^1, W^2\subseteq V$,  $W^3\subseteq V\oplus V$ are subspaces 
such that $W^3 \cap W^1\oplus W^2= 0$,
$W^3 \cap V \oplus \{0\} =0=W^3 \cap  \{0\} \oplus V$;
 \item $F\subseteq G\times G$ is  a subgroup that leaves invariant all  subspaces $W^i$, $i=1,2,3$;
 \item if $W^3\neq 0$ then $(u, u)\in F$;

 \item denote $W=W^1\oplus W^2\oplus W^3$. Then $\beta:W\times W\to \ku$
is a bilinear form stable under the action of $F$, such that
$$\beta(w_1,w_2)=-\beta(w_2,w_1),\; \beta(w_1,w_3)=\beta(w_3,w_1),\; \beta(w_2,w_3)=-\beta(w_3,w_2),
$$
for all $w_i\in W^i$, $i=1,2,3$, and $\beta$ restricted to $W^i\times W^i$ is symmetric
for any $i=1,2,3$;
\item if $(u, u)\notin F$ then  $\beta$ restricted to $W^1\times W^2$ and $ W^2\times  W^3$ is null;
\item  $\psi\in H^2(F,\ku^{\times})$.
\end{itemize}

\end{defi}

If $(W^1,W^2,W^3, \beta, F, \psi)$ is compatible with $(V,u,G)$ the left $\Bc(V,u,G)$-comodule algebra
$\kc(W, \beta, F, \psi)$  is defined as follows. The algebra $\kc(W, \beta, F, \psi)$  is generated by $W$ and  $\{e_f: f\in F\}$,
subject to relations
$$ e_f e_h=\psi(f,h)\, e_{fh}, \quad e_f w= (f\cdot w)  e_f,$$
$$w_iw_j+w_jw_i=\beta(w_i,w_j) 1,\quad w_i\in W^i, w_j\in W^j,$$
for any $ (i,j)\in
\{(1,1), (2,2), (1,3), (3,3) \},$ and relations
$$ w_2 w_3 - w_3 w_2= \beta(w_2,w_3)\, e_{(u, u)} , \text{ for any  } w_2\in W^2, w_3\in W^3,$$
$$ w_1 w_2 - w_2 w_1= \beta(w_1,w_2)\, e_{(u, u)}, \text{ for any  } w_1\in W^1, w_2\in W^2.$$
The left coaction $\delta: \kc(W, \beta, F, \psi)\to \Bc(V,u,G)\otk \kc(W, \beta, F, \psi)$ is defined on the generators
$$\delta(e_f)=f\ot e_f,\quad \delta(v,w)=v\ot 1 + w (u,u)\ot e_{(u, u)} +  (u,1) \ot  (v,w),$$
$$\delta(w_2)= w_2\ot 1 + (1,u)\ot w_2, \quad
\delta(w_1)= w_1\ot 1 + (u,1)\ot w_1, $$
for any $f\in F, w_1\in W^1$, $w_2\in W^2,$ $ (v,w)\in W^3.$ This family of comodule algebras was 
introduced in \cite{M2} to classify certain module categories.

\begin{defi}  If  $(W^1,W^2,W^3, \beta, F, \psi)$ is a compatible data with $(V,u,G)$
such that $W^1=W^2=0$ we shall denote
$\ele(W,\beta, F, \psi)=\kc(W, \beta, F, \psi)$.
\end{defi}

The following result is \cite[Prop. 7.4, Thm. 7.10]{M2}.
\begin{teo}\label{class-simple-ca}  The following assertions hold.
\begin{itemize}
\item[1.] $\dim \kc(W, \beta, F, \psi)= \dim W |F|.$
 \item[2.] The algebra $\kc(W, \beta, F, \psi)$  is a $\Bc(V,u,G)$-simple
 left comodule algebra with trivial coinvariants.
\end{itemize}
Moreover, any  $\Bc(V,u,G)$-simple
 left $\Bc(V,u,G)$-comodule algebra with trivial coinvariants is isomorphic to
one $\kc(W, \beta, F, \psi)$ for some compatible data $(W, \beta, F, \psi)$.\qed
\end{teo}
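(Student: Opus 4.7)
\medbreak

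\textbf{Plan of proof.} I would split the statement into three independent pieces: (a) the existence and dimension of $\kc(W,\beta,F,\psi)$, (b) the $\Bc(V,u,G)$-simplicity and triviality of coinvariants, and (c) the converse classification. For (a), the defining relations of $\kc(W,\beta,F,\psi)$ fall into two kinds: quadratic relations among the generators $w\in W$ whose leading term is a skew-symmetrisation (giving exterior-algebra behaviour modulo lower terms proportional to $e_{(u,u)}$), and linear crossed relations $e_f w=(f\cdot w)e_f$ together with the twisted group law $e_fe_h=\psi(f,h)e_{fh}$. A standard Bergman diamond-lemma / PBW argument (ordering $W^1$ before $W^2$ before $W^3$, with any fixed basis inside each summand, then $e_f$ on the right) shows that all ambiguities resolve thanks to the $F$-invariance and $(\pm)$-symmetry assumptions on $\beta$; so one obtains the expected basis $\{w_{i_1}\!\cdots w_{i_k}\, e_f\}$ and hence $\dim \kc(W,\beta,F,\psi)=2^{\dim W}|F|$.

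\medbreak

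For (b), I would first verify that $\delta$ is an algebra map by checking its compatibility with each defining relation; the cases involving $\beta$ reduce to the identities $\delta(e_{(u,u)})=(u,u)\otimes e_{(u,u)}$ and the commutator structure in $\Bc(V,u,G)$, and the assumption $(u,u)\in F$ when $W^3\neq 0$ is exactly what is required here. Trivial coinvariants then follow from a filtration argument: the Loewy filtration on $\kc$ is inherited from the coradical filtration of $\Bc(V,u,G)$, and an element $a$ with $\delta(a)=1\otimes a$ must have top component annihilated by the coaction modulo degree-zero terms, which by the explicit form of $\delta$ on $W^i$ and on $W^3$ forces $a\in\kc_0=\ku\langle e_f:f\in F\rangle$; but on $\kc_0$ the coaction is $e_f\mapsto f\otimes e_f$, so coinvariance gives $a\in\ku 1$. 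For $H$-simplicity I would take a nonzero $H$-costable ideal $I$, pick $0\neq a\in I$ of minimal Loewy degree, and apply $(\varepsilon\otimes \id)\delta$ together with multiplication by appropriate $w_i$'s to lower the degree; minimality forces $a\in\kc_0\cap I$, and then triviality of coinvariants in a simple sub-bicomodule of $\kc_0$ yields $1\in I$.

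\medbreak

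For (c), the converse, I would follow the scheme used in \cite{M1,M2}: let $A$ be a $\Bc(V,u,G)$-simple comodule algebra with trivial coinvariants. By the Lemma preceding Section \ref{spa}, after twisting by a Hopf $2$-cocycle $\sigma_{\widehat\psi}$ coming from a class $\widehat\psi\in Z^2(G\times G,\ku^\times)$, the associated graded $(\gr A)_{\sigma_{\widehat\psi}}$ embeds as a homogeneous left coideal subalgebra $K$ of $\Bc(V,u,G)^{[\sigma_{\widehat\psi}]}$, which by Lemma \ref{twisting-supergroup} is isomorphic either to $\Bc(V,u,G)$ itself or to $\Hc(V,V,u,u,G,G)$ according to the sign $\xi$. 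In the first case the homogeneous coideal subalgebras are parameterised by a subgroup $F\subseteq G\times G$ and two $F$-stable subspaces $W^1,W^2\subseteq V$ (the genuinely diagonal space $W^3$ being forced to zero because the Nichols subalgebras of $V\oplus V$ stable under $G\times G$ split as a direct sum when the two copies of $u$ are linearly independent cocharacters); in the second case the antidiagonal cocommutation makes additional $F$-stable subspaces $W^3\subseteq V\oplus V$ transverse to $V\oplus 0$ and $0\oplus V$ possible, which forces $(u,u)\in F$. This identifies $\gr A$, up to twisting, with some $\kc(W,0,F,\psi)$. Untwisting and then classifying liftings of $\gr A$ in the sense of \cite{M1} reveals that the only deformation parameters left are the coefficients of the bracketing relations $w_iw_j\pm w_jw_i\in \ku e_{(u,u)}$, producing exactly the bilinear form $\beta$; the $F$-invariance and the listed symmetry/antisymmetry signs of $\beta$ fall out of $F$-equivariance and the antipode identities. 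Comparing dimensions with (a) shows that this procedure recovers all of $A$.

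\medbreak

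\textbf{Main obstacle.} The delicate step is the classification of homogeneous left coideal subalgebras of $\Bc(V,u,G)$ and of $\Hc(V,V,u,u,G,G)$, and afterwards the description of their liftings: one has to rule out spurious coideal subalgebras (in particular, to control when $W^3\neq 0$ is forced or forbidden) and to determine exactly which cocycle-deformed multiplications are compatible both with the coaction and with associativity, so that only the parameters encoded in $\beta$ survive. This is where the precise list of compatibility axioms in the definition of a compatible datum enters in an essential way, and it is this bookkeeping that consumes most of the argument.
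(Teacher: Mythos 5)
A point of comparison you could not have known: the paper does not prove this theorem at all --- it is quoted from \cite[Prop.~7.4, Thm.~7.10]{M2}, and the only ``proof'' in the text is the strategy sketched in the paragraph opening the subsection (pass to $\gr A$ for the Loewy filtration, twist by $\sigma_{\widehat\psi}$ into a homogeneous left coideal subalgebra of $\Bc(V,u,G)^{[\sigma_{\widehat\psi}]}$, classify those, then classify liftings). Your part (c) reproduces exactly that scheme, so at the level of strategy you are aligned with the source. Two remarks on (a)--(b). Your diamond-lemma count gives $\dim\kc(W,\beta,F,\psi)=2^{\dim W}|F|$, which contradicts the literal statement $\dim W\,|F|$; note, however, that the paper later uses item (1) to conclude $\dim\ele(W,\beta,F,\psi)=\dim\Ac(V,u,G)=2^{\dim V}|G|$ when $\dim W=\dim V$ and $|F|=|G|$, so the exponential count is the one actually needed and the displayed formula must be read as $\dim(\wedge W)\,|F|$; you should flag this rather than silently prove a different formula. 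In (b), ``multiplication by appropriate $w_i$'s to lower the degree'' is backwards: multiplication by generators raises Loewy degree. The standard argument applies components of the coaction of a nonzero element of a costable ideal $I$ to produce a nonzero element of $I\cap\kc_0=I\cap\ku_\psi F$, and then uses that a twisted group algebra with its tautological coaction is comodule-simple (any $e_f\in I$ forces $1=\psi(f,f^{-1})^{-1}e_fe_{f^{-1}}\in I$).

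The genuine gap is in (c), which is where the content of the theorem sits. You correctly reduce to (i) classifying homogeneous left coideal subalgebras of $\Bc(V,u,G)^{[\sigma_\psi]}$, which by Lemma \ref{twisting-supergroup} is either $\Bc(V,u,G)$ or $\Hc(V,V,u,u,G,G)$, and (ii) classifying liftings; but both steps are asserted rather than carried out, and the structural claims you interpose are unjustified and not obviously correct. In particular, the degree-one part of a homogeneous coideal subalgebra lives in $(V\oplus V)\otimes\ku(G\times G)$, not in $V\oplus V$: the ``diagonal'' spaces $W^3$ arise from elements of the form $v+w(u,u)$ (whence the requirement $(u,u)\in F$), and one checks that such elements already span left coideals in the untwisted $\Bc(V,u,G)$; so the dichotomy you describe --- $W^3$ forced to vanish in the first case, permitted only in the second --- is at best incomplete, and deciding which commutation relations (hence which forms $\beta$) such elements can satisfy in each twist is precisely the computation that occupies \cite{M2}. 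Likewise, the claim that ``the only deformation parameters left are the coefficients of the bracketing relations'' is the conclusion of \cite[Thm.~7.10]{M2}, not an argument for it. As written, your (c) is an accurate table of contents for the missing proof rather than the proof itself.
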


For later use, we shall give explicitly the left and right coactions on
the algebra $\ele(W,\beta, \psi)$. Any left $\Bc(V,u,G)$-comodule is a
$\Ac(V,u,G)$-bicomodule where the  right coaction is obtained using the canonical projection
$$\epsilon\ot\id:\Bc(V,u,G)=\Ac(V,u,G)\otk \Ac(V,u,G) \twoheadrightarrow \Ac(V,u,G),$$
composed with the isomorphism
$\phi: \Ac(V,u,G)\to \Ac(V,u,G)^{\cop}$ given in Lemma \ref{iso-cop}.
\medbreak

The $\Ac(V,u,G)$-bicomodule structure on $\ele(W,\beta, F,\psi)$ is given by
the left and right actions $\lambda:\ele(W,\beta, F,\psi ) \to \Ac(V,u,G)\otk \ele(W,\beta,F,\psi ),$
$\rho:\ele(W,\beta,F, \psi ) \to \ele(W,\beta,F, \psi) \otk \Ac(V,u,G)$ determined by
\begin{equation}\label{r-l-coact}
 \lambda(v,w)=v\ot 1 + u\ot (v,w), \quad \rho(v,w)= e_{(u,u)}\ot w+ (v,w)\ot 1,
\end{equation}
$$\lambda(e_{(g,f)})=g\ot e_{(g,f)}, \quad \rho(e_{(g,f)})=e_{(g,f)}\ot f,$$
for all $(g, f)\in F$, $(v,w)\in W$.

\begin{lema} If  $F\subseteq G\times G$ is a subgroup such that $(u,u)\in F$,
$|F|=|G|$, $F\cap G\times \{1\}=\{1\}= F\cap \{1\}\times G$ and
$W\subseteq V\oplus V$ is a subspace stable under the action of $F$ such that  $\dim W=\dim V$,
$W \cap V \oplus 0 =0=W \cap 0 \oplus V$; then the comodule algebras $\ele(W,\beta, F, \psi)$ are $\Ac(V,u,G)$-biGalois
objects.
\end{lema}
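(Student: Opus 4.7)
The plan is to prove the biGalois property by showing that both of the canonical Galois maps
\[
\beta_L:\ele\otk\ele\to\Ac\otk\ele,\qquad \beta_R:\ele\otk\ele\to\ele\otk\Ac,
\]
defined by $\beta_L(a\ot b)=a\_{-1}\ot a\_0\, b$ and $\beta_R(a\ot b)=a\, b\_0\ot b\_1$, are bijective. The hypotheses on $F$ and $W$ force $F=\{(g,\theta(g)):g\in G\}$ for a group automorphism $\theta:G\to G$ and $W=\{(v,\varphi(v)):v\in V\}$ for a linear isomorphism $\varphi:V\to V$ intertwining the actions of $g$ and $\theta(g)$.

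First I would count dimensions. By Theorem \ref{class-simple-ca}(1), $\dim\ele=\dim(\wedge W)\cdot|F|=2^{\dim V}|G|=\dim\Ac$, so $\ele\otk\ele$ and $\Ac\otk\ele$ have the same finite dimension; hence for $\beta_L$ bijectivity is equivalent to surjectivity, and similarly for $\beta_R$.

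Next I would prove $\beta_L$ surjective by exploiting two structural features of $\im(\beta_L)$. It is a right $\ele$-submodule of $\Ac\otk\ele$ for the action on the second tensorand, since $\beta_L(a\ot bc)=\beta_L(a\ot b)(1\ot c)$, and it is closed under left multiplication by $\lambda(\ele)\subseteq \Ac\otk\ele$, since $\lambda$ is an algebra map and $\lambda(x)\cdot\beta_L(a\ot b)=\beta_L(xa\ot b)$. Each $e_{(g,\theta(g))}$ is invertible in $\ele$, with $\lambda(e_{(g,\theta(g))})=g\ot e_{(g,\theta(g))}$, so $g\ot 1\in\im(\beta_L)$ for every $g\in G$. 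Fix a basis $\{v_1,\dots,v_n\}$ of $V$ and set $\widehat v_i=(v_i,\varphi(v_i))\in W$. For $i_1<\cdots<i_k$ and $g\in G$ one has
\[
\lambda(\widehat v_{i_1}\cdots\widehat v_{i_k}\,e_{(g,\theta(g))})=\prod_{j=1}^{k}\!\bigl(v_{i_j}\ot 1+u\ot\widehat v_{i_j}\bigr)\cdot(g\ot e_{(g,\theta(g))}),
\]
and after right multiplication by $1\ot e_{(g,\theta(g))}^{-1}$ this produces $v_{i_1}\cdots v_{i_k}g\ot 1$ together with a sum of tensors whose first component lies in $\Ac$ and involves strictly fewer than $k$ of the letters $v_{i_j}$. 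An induction on $k$ gives $v_I g\ot 1\in\im(\beta_L)$ for every element $v_I g$ of the PBW basis of $\Ac$; combined with the right $\ele$-module property this yields $\Ac\otk\ele\subseteq\im(\beta_L)$.

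An entirely symmetric argument using the right coaction $\rho$ of \eqref{r-l-coact} and the isomorphism $\phi:\Ac\to\Ac^{\cop}$ of Lemma \ref{iso-cop} establishes surjectivity of $\beta_R$. The dimension count upgrades both surjectivities to bijections; in particular the coinvariant subalgebras on either side are forced to be $\ku$, and $\ele(W,\beta,F,\psi)$ is an $\Ac(V,u,G)$-biGalois object. The main obstacle is the inductive bookkeeping: one must keep track of the anticommutation $v_iu=-uv_i$ in $\Ac$, of the anticommutations in $\ele$ (which, via $\beta$, produce scalar correction terms), and of the $G$-action arising when pushing $g$ past the $v_i$, to verify that every correction term genuinely falls under the inductive hypothesis.
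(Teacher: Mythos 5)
Your proof is correct and follows essentially the same route as the paper's: both reduce bijectivity of the canonical map(s) to surjectivity via the dimension count $\dim \ele(W,\beta,F,\psi)=\dim \Ac(V,u,G)$ from Theorem \ref{class-simple-ca}(1), and then produce explicit preimages of elements $h\otimes 1$. The only divergence is in execution: the paper notes that it suffices to hit the algebra generators $g\in G$ and $v\in V$, exhibiting the preimages $e_{(g,f)}\otimes e_{(g^{-1},f^{-1})}$ and $(v,w)\otimes 1-e_{(u,u)}\otimes e_{(u,u)}(v,w)$, whereas you run an induction over the full PBW basis of $\Ac(V,u,G)$ --- heavier bookkeeping, but it avoids invoking the standard fact that the set of elements admitting a translation element is closed under products.
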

 \pf We shall prove that the algebra $\ele(W,\beta, F, \psi)$ is a Hopf-Galois object from the left. The proof that it is
 Hopf-Galois from the right is similar. The conditions on the subgroup $F$ assures that the comodule algebra 
 $\ele(W,\beta, F, \psi)$ has trivial coinvariants.
 We must show that the canonical map
 $$can: \ele(W,\beta, F, \psi)\otk \ele(W,\beta, F, \psi)\to \Ac(V,u,G)\otk \ele(W,\beta, F, \psi),  $$
 $$can(a\ot b)= a\_{-1}\ot a\_0 b, $$
 is an isomorphism. By Theorem \ref{class-simple-ca} (1) the dimension of $\ele(W,\beta, F, \psi)$
 equals the dimension of $\Ac(V,u,G)$, hence it is enough to prove that $can$ is surjective. 
 The map $can$ is surjective if for any algebra generator $a\in \Ac(V,u,G)$ there exists an element
 $z\in \ele(W,\beta, F, \psi)\otk \ele(W,\beta, F, \psi)$ such that $can(z)=a\ot 1$. 
 
 Since $|F|=|G|$, for any $g \in G$ there exists $f\in G$ such that $(g,f)\in F$. Then
 $$can(e_{(g,f)}\ot e_{(g^{-1},f^{-1})} )=g\ot 1.$$
 Since $\dim W=\dim V$, for any $v\in V$ there exists $w\in V$ such that $(v,w)\in W$. Then, since $(u,u)\in F$
 $$ can((v,w)\ot 1- e_{(u,u)}\ot e_{(u,u)} (v,w))= v\ot 1$$
 \epf

\subsection{Hopf BiGalois objects over $\Ac(V,u,G)$ }
We shall use the description of $\Bc(V,u,G)$-simple left comodule algebras given in the previous
section to classify $\Ac(V,u,G)$-Hopf BiGalois objects.

\begin{teo}\label{class-hopf-bigal} Any $\Ac(V,u,G)$-biGalois object is isomorphic to an algebra
of the form  $\ele(W,\beta, F, \psi)$, where
\begin{itemize}
\item $F\subseteq G\times G$ is a subgroup such that
$F\cap G\times \{1\}=\{1\}= F\cap \{1\}\times G$, $|F|=|G|$, $(u,u)\in F$;
 \item $W\subseteq V\oplus V$ is a subspace stable under the action of $F$ such that  $\dim W=\dim V$,
$W \cap V \oplus 0 =0=W \cap 0 \oplus V$;
\item $\beta:W\times W\to \ku$ is a $F$-invariant symmetric bilinear form;
\item and $\psi\in H^2(F, \ku^{\times})$ is a 2-cocycle.

\end{itemize}
\pf

Let $A$ be a $\Ac(V,u,G)$-biGalois object. We have that $A$ is a  $\Bc(V,u,G)$-simple left
$\Bc(V,u,G)$-comodule algebra with trivial coinvariants. This implies that there exists
a compatible data $(W^1,W^2,W^3, \beta, F, \psi)$
such that $A\simeq \kc(W, \beta, F, \psi)$. Since the coinvariants of $A$ are trivial, 
$W^1=W^2=0$ and $W=W^3$. The conditions stated on $F$ and $W$ must be satisfied since
 the coinvariants of $A$ are trivial and $\dim A=\dim H$.
\epf

\end{teo}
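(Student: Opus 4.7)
The plan is to deduce the classification from Theorem \ref{class-simple-ca} together with the additional restrictions imposed by biGalois data, namely trivial coinvariants on both sides and total dimension equal to $\dim\Ac(V,u,G)$.

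First, I would recall from Section \ref{subsection:hopfbg} (via \cite[Corollary 8.3.10]{Mo}) that any $\Ac(V,u,G)$-biGalois object $A$ is automatically $\Bc(V,u,G)$-simple as a left $\Bc(V,u,G)$-comodule algebra with trivial coinvariants, where $\Bc(V,u,G)=\Ac(V,u,G)\otk \Ac(V,u,G)^{\cop}$. Applying Theorem \ref{class-simple-ca}, there exists a compatible datum $(W^1,W^2,W^3,\beta,F,\psi)$ together with an isomorphism of $\Bc$-comodule algebras $A\simeq \kc(W^1\oplus W^2\oplus W^3,\beta,F,\psi)$.

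Next, I would show $W^1=W^2=0$. The right $\Ac(V,u,G)$-coaction on $A$ is recovered from the left $\Bc$-coaction by applying $\va\otk\id$ on the first tensor factor and then composing with the Hopf isomorphism $\phi$ of Lemma \ref{iso-cop}. Inspecting the coaction formulas, elements of $W^1$ live in the first copy of $V$ and hence become right-$\Ac$-coinvariant after this projection, while elements of $W^2$ live in the second copy and similarly become left-$\Ac$-coinvariant. Since a biGalois object has trivial coinvariants on both sides, this forces $W^1=W^2=0$. Setting $W=W^3$, we get $A\simeq \ele(W,\beta,F,\psi)$.

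Finally, a dimension count and a parallel coinvariance analysis pin down the remaining conditions. The equality $\dim A=\dim\Ac(V,u,G)=2^{\dim V}|G|$ and $\dim\ele=2^{\dim W}|F|$, together with $W\subseteq V\oplus V$ and $F\subseteq G\times G$, force $\dim W=\dim V$ and $|F|=|G|$. Using the explicit formulas \eqref{r-l-coact}, one sees that an element $e_{(g,1)}$ is left-coinvariant and $(v,0)\in W$ would be left-coinvariant as well, whereas $e_{(1,f)}$ and $(0,w)$ become right-coinvariant after passing through $\phi$; triviality of coinvariants on both sides therefore yields $F\cap (G\times\{1\})=\{1\}=F\cap(\{1\}\times G)$ and $W\cap(V\oplus 0)=0=W\cap(0\oplus V)$. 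The condition $(u,u)\in F$ is part of the compatibility datum whenever $W=W^3\neq 0$. The converse, that each such $\ele(W,\beta,F,\psi)$ really is a biGalois object, is exactly the statement of the preceding lemma. The main technical subtlety is the careful bookkeeping of which elements of $\kc$ become coinvariant under the left or the right $\Ac$-coaction given that the right coaction is twisted by $\phi$; the dimension comparisons themselves are routine.
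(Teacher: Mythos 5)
Your proposal is correct and follows essentially the same route as the paper: reduce to the classification of $\Bc(V,u,G)$-simple comodule algebras with trivial coinvariants (Theorem \ref{class-simple-ca}), then use triviality of the left and right $\Ac(V,u,G)$-coinvariants together with the dimension equality $\dim A=\dim\Ac(V,u,G)$ to kill $W^1,W^2$ and force the stated conditions on $F$ and $W$. Your version merely spells out the coinvariance bookkeeping and the dimension count that the paper leaves implicit.
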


Now, we shall give an alternative description of
compatible data $(W, \beta, F, \psi)$ such that the comodule algebra $\ele(W,\beta, F,\psi )$
is a biGalois object. \smallbreak

A collection $(T,\beta, \alpha, \psi)$  will  be also called a \emph{compatible data} if:
\begin{itemize}
 \item  $\alpha: G\to G$ is a group isomorphism such that $\alpha(u)=u$;
 \item $T:V\to V$ is a linear automorphism such that
$$T(g\cdot v)=\alpha(g)\cdot T(v),\quad v\in V, g\in G;$$
\item $\beta:V\times V\to \ku$ is a symmetric $G$-invariant bilinear form;
\item $\psi\in H^2(G, \ku^{\times})$ is a 2-cocycle.
\end{itemize}

\begin{lema}\label{b-c1} There is a bijective correspondence between the set of
compatible data $(T,\beta, \alpha, \psi)$ and collections $(W,\beta, F, \psi)$ such that they satisfy the
conditions of Theorem \ref{class-hopf-bigal}.
\end{lema}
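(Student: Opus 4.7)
The plan is to realize the correspondence by taking graphs. Given a compatible datum $(T,\beta,\alpha,\psi)$, I would define
$$F=\{(g,\alpha(g)) : g\in G\}\subseteq G\times G, \qquad W=\{(v,T(v)) : v\in V\}\subseteq V\oplus V,$$
and transport $\beta$ and $\psi$ to $W$ and $F$ via the isomorphisms $g\mapsto(g,\alpha(g))$ and $v\mapsto(v,T(v))$. Conversely, given a datum $(W,\beta,F,\psi)$ as in Theorem \ref{class-hopf-bigal}, I would show that the first-coordinate projections $F\to G$ and $W\to V$ are isomorphisms, and then recover $\alpha,T$ as the compositions with the second-coordinate projections. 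Transporting $\beta$ and $\psi$ back along these isomorphisms yields the compatible datum $(T,\beta,\alpha,\psi)$.

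First I would verify the forward direction. That $F$ is a subgroup of order $|G|$ is immediate from $\alpha$ being a group automorphism; the intersections $F\cap(G\times\{1\})$ and $F\cap(\{1\}\times G)$ are trivial because $\alpha$ is bijective, and $(u,u)\in F$ because $\alpha(u)=u$. For $W$, the linearity and bijectivity of $T$ ensure $\dim W=\dim V$ and that both intersections with $V\oplus 0$ and $0\oplus V$ vanish. $F$-stability of $W$ is exactly the equivariance condition $T(g\cdot v)=\alpha(g)\cdot T(v)$, since
$$(g,\alpha(g))\cdot(v,T(v))=(g\cdot v,\alpha(g)\cdot T(v))=(g\cdot v, T(g\cdot v))\in W.$$
Symmetry and $F$-invariance of the induced form on $W$ follow from the symmetry and $G$-invariance of $\beta$, while the 2-cocycle condition is preserved under the group isomorphism $G\simeq F$.

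Next I would construct the inverse. Given $(W,\beta,F,\psi)$, the first projection $\pi_1\colon F\to G$ is a group homomorphism which is injective (by $F\cap\{1\}\times G=\{1\}$) and surjective (by $|F|=|G|$), hence an isomorphism; then $\alpha:=\pi_2\circ\pi_1^{-1}\colon G\to G$ is a group automorphism, and $\alpha(u)=u$ follows from $(u,u)\in F$. Analogously, $W\cap(0\oplus V)=0$ together with $\dim W=\dim V$ forces the first projection $W\to V$ to be a linear isomorphism, so $T\colon V\to V$ is well-defined and $W\cap(V\oplus 0)=0$ gives injectivity of $T$, hence bijectivity. The $F$-stability of $W$ recovers the equivariance $T(g\cdot v)=\alpha(g)\cdot T(v)$, and the form and cocycle pull back to a symmetric $G$-invariant form on $V$ and a class in $H^{2}(G,\ku^\times)$.

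Finally I would check that the two constructions are mutual inverses, which is straightforward since each uses the graph/projection of the same underlying maps. The main point is not any single difficult step but the careful bookkeeping of which conditions on $(W,F)$ correspond to which conditions on $(T,\alpha)$; once one observes that the dimensional/cardinality hypotheses in Theorem \ref{class-hopf-bigal} are precisely what makes the relevant projections isomorphisms, everything else is routine.
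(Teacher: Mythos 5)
Your proof is correct and is essentially the paper's argument: both realize the correspondence by passing to the graphs of $T$ and $\alpha$, and both observe that the trivial-intersection and cardinality/dimension hypotheses of Theorem \ref{class-hopf-bigal} are precisely what make the relevant projections bijective. The only difference is a coordinate convention --- the paper takes $W=\{(T(v),v)\}$ and $F=\{(\alpha(g),g)\}$ rather than $\{(v,T(v))\}$ and $\{(g,\alpha(g))\}$ --- which is immaterial for the bijection itself but should be aligned with the paper's choice if one wants the later formulas (e.g.\ the coactions \eqref{r-l-coact} and $\ele(T,\beta,\alpha,\psi)\Box_{\Ac(V,u,G)}\ku_g\simeq\ku_{\alpha(g)}$) to come out exactly as stated.
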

\pf If $(T,\beta, \alpha, \psi)$ is a compatible data define $(W,\widehat{\beta}, F, \widehat{\psi})$ as follows:
  $$W=\{(T(v),v): v\in V\},\quad F=\{(\alpha(g),g): g\in G\}.$$
The bilinear form $\widehat{\beta}$ and the 2-cocycle $ \widehat{\psi}$ are defined as
$$\widehat{\beta}((T(v),v),( T(w),w))=\beta(v,w), \quad
\widehat{\psi}((\alpha(g),g),(\alpha(f),f))=\psi(g,f),$$
for all $v, w\in V,$ $g,f\in G$. Let  $(W, \beta, F, \psi)$ be a compatible data satisfying
conditions of Theorem \ref{class-hopf-bigal}. If $(x,g)\in F$, since $F\cap G\times\{1\} = \{1\}$,
then $x$ is uniquely determined by the element $g$. 
So we can denote $x=\alpha(g)$. Since $|F|=|G|$ the function $\alpha$ is defined 
for any $g\in G$. Also, since $F\cap \{1\}\times G= \{1\}$, the map $\alpha $ is injective. The fact that 
$|F|=|G|$ implies that it is bijective.
Since $F$ is a group, $\alpha$ is a group homomorphism, hence it is a group isomorphism. The definition of the linear isomorphism $T$ is 
analogous. Both constructions are one the inverse of the other.
\epf

\begin{defi} If $(T,\beta, \alpha, \psi)$ is a compatible data  denote
$\ele(T,\beta, \alpha, \psi)$  the algebra  $\ele(W,\beta, F, \psi)$ where
the collection $(W,\beta, F, \psi)$ is the associated data to $(T,\beta, \alpha, \psi)$
under the correspondence of Lemma \ref{b-c1}. If $(T,\beta, \alpha, \psi)$, $(T',\beta', \alpha', \psi')$
are compatible data, define
$$(T,\beta, \alpha, \psi)\bullet (T',\beta', \alpha', \psi') = (T\circ T',\beta\circ T'+\beta',\alpha\circ \alpha',\psi \psi'). $$
If $g\in G$ define $T_g:V\to V$ the isomorphism $T_g(v)=g\cdot v$ for all $v\in V$. Then
$(T_g, 0,\id,1)$ is a compatible data for all $g\in G$.
\end{defi}

 \begin{lema} Let $(T,\beta, \alpha, \psi)$, $(T',\beta', \alpha', \psi')$ be compatible data.
\begin{itemize}
 \item[1.] The collection $(T\circ T',\beta\circ T'+\beta',\alpha\circ \alpha',\psi \psi') $ is a compatible data.

 \item[2.] The set of compatible data with product
\begin{equation}\label{product-in-big}
(T,\beta, \alpha, \psi)\bullet (T',\beta', \alpha', \psi') = (T\circ T',\beta\circ T'+\beta',\alpha\circ \alpha',\psi \psi')
\end{equation}
is a group with identity $(\Id,0,\id, 1)$.
\end{itemize}

 \end{lema}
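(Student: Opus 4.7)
The plan is to verify both statements by checking each of the four components of a compatible datum separately, since the group structure decouples nicely.

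For part (1), I would verify in turn that each of the four pieces of $(T\circ T',\beta\circ T'+\beta',\alpha\circ \alpha',\psi \psi')$ satisfies the required conditions. The composition $\alpha\circ\alpha'$ is a group isomorphism as a composition of isomorphisms, and $(\alpha\circ\alpha')(u)=\alpha(\alpha'(u))=\alpha(u)=u$. The composition $T\circ T'$ is a linear automorphism, and the twisted equivariance is straightforward:
\begin{equation*}
(T\circ T')(g\cdot v)=T(\alpha'(g)\cdot T'(v))=\alpha(\alpha'(g))\cdot (T\circ T')(v).
\end{equation*}
For the bilinear form, by $\beta\circ T'$ I understand $(\beta\circ T')(v,w):=\beta(T'(v),T'(w))$; symmetry is inherited from $\beta$, and $G$-invariance follows from the twisted equivariance of $T'$ combined with $G$-invariance of $\beta$. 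Finally, $\psi\psi'$ is a 2-cocycle because $Z^2(G,\ku^\times)$ is an abelian group under pointwise multiplication.

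For part (2), associativity has to be checked on each slot. The first, third and fourth components (composition of maps, composition of isos, product of cocycles) are associative operations on their own. The only nontrivial check is associativity in the bilinear form slot: one computes that both
\begin{equation*}
\bigl((T,\beta,\alpha,\psi)\bullet(T',\beta',\alpha',\psi')\bigr)\bullet(T'',\beta'',\alpha'',\psi'')
\end{equation*}
and the other bracketing yield bilinear form $\beta\circ(T'\circ T'')+\beta'\circ T''+\beta''$, which agrees because $(\beta\circ T')\circ T''=\beta\circ(T'\circ T'')$ as maps on $V\times V$.

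For the identity, note $(\Id,0,\id,1)$ is compatible (with $\psi=1$ the trivial cocycle), and plugging it in on either side of $\bullet$ clearly returns $(T,\beta,\alpha,\psi)$. For inverses, the natural candidate is
\begin{equation*}
(T,\beta,\alpha,\psi)^{-1}=(T^{-1},-\beta\circ T^{-1},\alpha^{-1},\psi^{-1}),
\end{equation*}
and a short verification shows that both products with $(T,\beta,\alpha,\psi)$ collapse to $(\Id,0,\id,1)$; one also needs to check that this inverse itself is a compatible datum, which is routine from the compatibility of $(T,\beta,\alpha,\psi)$. There is no real obstacle here: the whole proof is a direct componentwise verification, with the only mildly delicate point being keeping track of the twist $\beta\mapsto \beta\circ T'$ in associativity and in the construction of the inverse.
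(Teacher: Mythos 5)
Your proof is correct and follows essentially the same route as the paper: the paper dismisses part (1) as straightforward and, for part (2), simply exhibits the same inverse $(T^{-1},-\beta\circ T^{-1},\alpha^{-1},\psi^{-1})$ that you construct. Your componentwise verification merely fills in the details the paper omits.
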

\pf 1. Straightforward.
\medbreak

2. For any compatible data $(T,\beta, \alpha, \psi)$
the collection $(T^{-1},-\beta\circ T ^{-1}, \alpha^{-1}, \psi^{-1})$
is again a compatible data and it is the inverse of $(T,\beta, \alpha, \psi)$.
\epf

\begin{defi} Define  the group $\qs(V,u,G)$ as the quotient of the set
of compatible data $(T,\beta, \alpha, \psi)$ with product
described in \eqref{product-in-big} modulo the normal subgroup of order two generated by
the element $(T_u, 0, \id, 1)$.
\end{defi}

 The set of compatible data
$\{(T_g, 0,\id,1): g\in G \}$ is a normal subgroup of $\qs(V,u,G)$.  The quotient group
$\qs(V,u,G)/\{(T_g, 0,\id,1): g\in G  \} $ is denoted by $ \os(V,u,G)$.

\begin{prop}\label{bi-inn} Let $(T,\beta, \alpha, \psi)$, $(T',\beta', \alpha', \psi')$  be compatible data.
The following assertions hold.
\begin{itemize}
 \item[1.] There is an isomorphism $\ele(T,\beta, \alpha, \psi)\simeq \ele(T',\beta', \alpha', \psi')$ of
biGalois objects if and only 
$$(T,\beta, \alpha, \psi)=(T',\beta', \alpha', \psi')
\text{ or }
(T_u\circ T,\beta, \alpha, \psi)= (T',\beta', \alpha', \psi').$$

\item[2.] $\ele(T,\beta, \alpha, \psi)\in \inbi(\Ac(V,u,G))$ if and only if
$(T,\beta, \alpha, \psi)= (T_g, 0,\id,1)$  for some $g\in G$.
 \item[3.] There is an isomorphism of $\Bc(V,u,G)$-comodule algebras
$$\ele(T,\beta, \alpha, \psi)\Box_{\Ac(V,u,G)} \ele(T',\beta', \alpha', \psi')\simeq
\ele(T\circ T',\beta\circ T'+\beta',\alpha\circ \alpha',\psi \psi').$$
\end{itemize}
\end{prop}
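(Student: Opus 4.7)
To establish the three assertions, I would work explicitly with the generators of $\ele(T,\beta,\alpha,\psi)$ and the left/right $\Ac(V,u,G)$-coactions in \eqref{r-l-coact}, building on Theorem \ref{class-hopf-bigal} and Lemma \ref{b-c1}.

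For (1), suppose $f: \ele(T,\beta,\alpha,\psi) \to \ele(T',\beta',\alpha',\psi')$ is a biGalois isomorphism. The group-like generators $e_{(g,k)}$ are characterized up to scalar by their left and right $\Ac$-degrees, so $f(e_{(g,k)}) = \mu_{(g,k)} e_{(g,k)}$ for some character $\mu: F \to \ku^\times$, which forces $F = F'$ (hence $\alpha = \alpha'$) and $\psi = \psi'$. Since $e_{(u,u)}^2 = \psi(u,u) = 1$ by the normalization \eqref{2-cocycl}, one has $\mu_{(u,u)}^2 = 1$, so $\mu_{(u,u)} \in \{\pm 1\}$. Setting $x := f((T(v),v))$, the right coaction constraint $\rho(x) = \mu_{(u,u)} e_{(u,u)} \otimes v + x \otimes 1$ together with $\lambda(x) = T(v) \otimes 1 + u \otimes x$ and a Loewy-filtration analysis of $\ele(T',\beta',\alpha',\psi')$ forces $x$ to equal $(T'(\mu_{(u,u)} v), \mu_{(u,u)} v)$ up to a group-like correction that is ruled out by $F \cap (G \times \{1\}) = \{1\}$. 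Matching the $\otimes 1$ parts yields $T(v) = \mu_{(u,u)} T'(v)$, giving $T' = T$ when $\mu_{(u,u)} = 1$ and $T' = T_u \circ T$ when $\mu_{(u,u)} = -1$; bilinearity of $\beta$ then forces $\beta = \beta'$ in either case. Conversely, when $T' = T_u \circ T$, I would choose a character $\chi: G \to \ku^\times$ with $\chi(u) = -1$ (which exists since $G$ is finite abelian and $u$ has order two) and define the iso by $(T(v),v) \mapsto (T(v),-v)$ and $e_{(\alpha(h),h)} \mapsto \chi(h) e_{(\alpha(h),h)}$, verifying all relations and coactions.

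For (2), I would first check that twisting the left coaction gives $\ele(T,\beta,\alpha,\psi)^g \simeq \ele(T_{g^{-1}} \circ T,\beta,\alpha,\psi)$ as biGalois objects, via the explicit iso $(g^{-1} T(v), v) \mapsto (T(v),v)$, $e_{(\alpha(h),h)} \mapsto e_{(\alpha(h),h)}$, using $g^{-1} T(v) g = g^{-1} \cdot T(v)$ and $g^{-1} u g = u$ in $\Ac(V,u,G)$. Since $\Ac(V,u,G) \simeq \ele(\Id, 0, \id, 1)$ via $v \mapsto (v,v)$, $h \mapsto e_{(h,h)}$, the equivalence $\ele(T,\beta,\alpha,\psi) \sim \Ac(V,u,G)$ amounts to $\ele(T_{g^{-1}} \circ T,\beta,\alpha,\psi) \simeq \ele(\Id,0,\id,1)$ for some $g \in G$. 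Applying part (1) forces $\beta = 0$, $\alpha = \id$, $\psi = 1$, and $T_{g^{-1}} \circ T \in \{\Id, T_u\}$, i.e.\ $T = T_g$ or $T = T_{gu}$; in either case $(T,\beta,\alpha,\psi) = (T_h,0,\id,1)$ for some $h \in G$. The converse follows from the same computation taken in reverse.

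For (3), I would construct the iso explicitly on generators of the cotensor. For $h \in G$ set
\[
E_h := e_{(\alpha\alpha'(h),\,\alpha'(h))} \otimes e_{(\alpha'(h),\,h)},
\]
and for $v \in V$ set
\[
X_v := ((T\circ T')(v),\,T'(v)) \otimes 1 + e_{(u,u)} \otimes (T'(v),\,v).
\]
Direct computation using \eqref{r-l-coact} shows that $E_h$ and $X_v$ lie in $\ele(T,\beta,\alpha,\psi) \Box_{\Ac(V,u,G)} \ele(T',\beta',\alpha',\psi')$, and a dimension count confirms they generate it. Define the iso by $E_h \mapsto e_{(\alpha\alpha'(h),\,h)}$ and $X_v \mapsto ((T\circ T')(v),\,v)$. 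The product $E_h E_k$ yields the cocycle $\psi(\alpha'(h),\alpha'(k)) \psi'(h,k)$, matching $\psi\psi'$ under the appropriate pullback interpretation, while $X_v X_w + X_w X_v$ expands (using $e_{(u,u)}^2 = 1$ and $e_{(u,u)} \tilde{w} = -\tilde{w} e_{(u,u)}$) to $(\beta(T'(v),T'(w)) + \beta'(v,w)) \cdot 1 \otimes 1$, matching $(\beta \circ T' + \beta')(v,w)$. The bicomodule structure is preserved by construction. The main obstacle is the forward direction of (1), where the Loewy-filtration analysis must carefully rule out higher-layer corrections in $f((T(v),v))$ beyond the two claimed cases.
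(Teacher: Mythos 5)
Your proposal is correct and follows essentially the same route as the paper: part (1) by tracking a bicomodule algebra isomorphism on the group-like generators $e_{(g,\alpha(g))}$ and on $W$ via the left and right coactions (the sign $\chi_u=\pm 1$ on $e_{(u,u)}$ producing the $T$ versus $T_u\circ T$ dichotomy), part (2) by reducing to (1) through the twisted coaction, and part (3) by the same explicit map on generators of the cotensor product. You actually supply several details the paper leaves implicit --- the converse direction of (1), the identification $\ele(T,\beta,\alpha,\psi)^g\simeq\ele(T_{g^{-1}}\circ T,\beta,\alpha,\psi)$, and the relation checks in (3) --- and your only inaccuracies are cosmetic: the degree-zero correction in $f((T(v),v))$ is killed by comparing the right coactions rather than by $F\cap(G\times\{1\})=\{1\}$, and the cocycle in (3) naturally appears as $\psi\circ(\alpha'\times\alpha')\cdot\psi'$, a point on which the paper's own notation $\psi\psi'$ is no more precise.
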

\pf 1. Let $f: \ele(T,\beta, \alpha, \psi)\to \ele (T',\beta', \alpha', \psi')$ be a
$\Bc(V,u,G)$-comodule algebra isomorphism. This implies that for any
$g\in G$ we have $f (e_{(g,\alpha(g))})= \chi_g\, e_{(g,\alpha(g))}$ for some
$\chi_g\in \ku$. Whence $\psi=\psi'$ in
$H^2(G, \ku^{\times })$. Since $e_{(u,u)}^2=1$ we have $\chi_u= \pm1$.

Denote by  $(W,\beta, \psi)$, $(W',\beta', \psi')$ the collections associated to
the compatible data $(T,\beta, \alpha, \psi)$ and $(T',\beta', \alpha', \psi')$, respectively, under
the correspondence of Lemma \ref{b-c1}.
Follows straightforward that $f(W)=W'$. If $f(x,y)=(x',y')$ for
$(x,y)\in W$ then, since
$f$ is a $\Bc(V,u,G)$-comodule map, the element
$$x'\ot 1+  y'(u,u)\ot e_{(u,u)} + (u,1)\ot (x',y')$$
is equal to
$$x\ot 1+ \chi_u \, y(u,u)\ot e_{(u,u)} + (u,1)\ot (x',y'). $$
Thus $f(x,y)=(x,\chi_u \,y)$. If $\chi_u=1$ both collections $(W,\beta, \psi)$, $(W',\beta', \psi')$
are equal. If $\chi_u=-1$ then $(T_u\circ T,\beta, \alpha, \psi)= (T',\beta', \alpha', \psi')$.

2.  Recall the definition of $\inbi(H)$ given in Section \ref{subsection:hopfbg}.
It follows directly from (1) and the definition of $\inbi(\Ac(V,u,G))$.

3. Define the algebra map
$$\vartheta: \ele(T\circ T',\beta\circ T'+\beta',\alpha\circ \alpha',\psi \psi')
\to \ele(T,\beta, \alpha, \psi)\Box_{\Ac(V,u,G)} \ele(T',\beta', \alpha', \psi')$$ as follows. If
$g\in G, v\in V$ then
$$\vartheta(T\circ T'(v),v)= (T\circ T'((v), T'(v))\ot 1 + e_{(u,u)}\ot (T'(v),v), $$
$$\vartheta(e_{(\alpha\circ \alpha'(g),g)})=
e_{(\alpha\circ \alpha'(g),\alpha'(g))}\ot e_{(\alpha'(g),g)}.$$
It follows by a straightforward calculation that the image of $\vartheta$ is inside
$\ele(T,\beta, \alpha, \psi)\Box_{\Ac(V,u,G)} \ele(T',\beta', \alpha', \psi')$.
The map
$\vartheta$ is an injective algebra map. Since both algebras have the same dimension,
$\vartheta$ is an isomorphism.
\epf

\begin{rmk} The proof of Part (1) of Proposition  \ref{bi-inn}
 gives a description of the possible bicomodule algebra isomorphisms between two
biGalois objects. This fact will be used later.
\end{rmk}

\begin{cor} There are group isomorphisms
$$\qs(V,u,G)\simeq \biga(\Ac(V,u,G)), \quad \os(V,u,G)\simeq \outb(\Ac(V,u,G)).$$\qed
\end{cor}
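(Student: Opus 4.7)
The plan is to package the already proven results into the desired group isomorphisms. Define a map
$$\Phi:\{\text{compatible data }(T,\beta,\alpha,\psi)\}\longrightarrow \biga(\Ac(V,u,G)), \quad (T,\beta,\alpha,\psi)\longmapsto [\ele(T,\beta,\alpha,\psi)].$$
Theorem \ref{class-hopf-bigal} together with Lemma \ref{b-c1} shows that every $\Ac(V,u,G)$-biGalois object is isomorphic to one of the form $\ele(T,\beta,\alpha,\psi)$, so $\Phi$ is surjective. Proposition \ref{bi-inn}(3) identifies $\ele(T,\beta,\alpha,\psi)\Box_{\Ac(V,u,G)}\ele(T',\beta',\alpha',\psi')$ with $\ele\bigl((T,\beta,\alpha,\psi)\bullet(T',\beta',\alpha',\psi')\bigr)$, hence $\Phi$ is a group homomorphism from the group of compatible data (under $\bullet$) onto $\biga(\Ac(V,u,G))$.

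To get the first isomorphism I compute the kernel of $\Phi$. By Proposition \ref{bi-inn}(1), $\ele(T,\beta,\alpha,\psi)\simeq \ele(T',\beta',\alpha',\psi')$ as biGalois objects if and only if $(T',\beta',\alpha',\psi')$ equals $(T,\beta,\alpha,\psi)$ or $(T_u\circ T,\beta,\alpha,\psi)$. In particular, $\Phi(T,\beta,\alpha,\psi)$ is trivial exactly when $(T,\beta,\alpha,\psi)\in\{(\Id,0,\id,1),(T_u,0,\id,1)\}$, which is precisely the order-two normal subgroup modded out in the definition of $\qs(V,u,G)$. Therefore $\Phi$ factors through an isomorphism $\qs(V,u,G)\simeq \biga(\Ac(V,u,G))$.

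For the second isomorphism I pass to quotients. By Proposition \ref{bi-inn}(2), the biGalois object $\ele(T,\beta,\alpha,\psi)$ lies in $\inbi(\Ac(V,u,G))$ if and only if $(T,\beta,\alpha,\psi)=(T_g,0,\id,1)$ for some $g\in G$. Under the isomorphism $\qs(V,u,G)\simeq \biga(\Ac(V,u,G))$ just constructed, $\inbi(\Ac(V,u,G))$ corresponds exactly to the image of the subgroup $\{(T_g,0,\id,1):g\in G\}$ (note that $(T_u,0,\id,1)$ is already trivial in $\qs(V,u,G)$, consistently with $u\in G$). Taking the quotient by this subgroup on both sides yields
$$\os(V,u,G)=\qs(V,u,G)/\{(T_g,0,\id,1):g\in G\}\simeq \biga(\Ac(V,u,G))/\inbi(\Ac(V,u,G))=\outb(\Ac(V,u,G)),$$
which completes the argument. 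The only subtlety worth verifying carefully is the compatibility between the quotient in the definition of $\qs(V,u,G)$ and the ambiguity $(T,\beta,\alpha,\psi)\sim(T_u\circ T,\beta,\alpha,\psi)$ in Proposition \ref{bi-inn}(1); this is exactly why the element $(T_u,0,\id,1)$ was built into the definition of $\qs(V,u,G)$, so no extra work is required.
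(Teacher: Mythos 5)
Your argument is correct and is precisely the assembly of Theorem \ref{class-hopf-bigal}, Lemma \ref{b-c1} and Proposition \ref{bi-inn} that the paper leaves implicit (the corollary is stated with no written proof). The identification of the kernel of $\Phi$ with the subgroup generated by $(T_u,0,\id,1)$ and the passage to the quotient by $\{(T_g,0,\id,1):g\in G\}$ are exactly the intended steps, so nothing further is needed.
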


\begin{rmk} As a consequence of \cite[Corollary 4.9]{FMM} and Proposition \ref{bi-inn} there is an exact sequence of groups
$$0\to G/<u>\to \qs(V,u,G) \to \brp(\Rep(\Ac(V,u,G)).$$
\end{rmk}

\begin{lema}\label{lm:iso-esp} Let  $(T,\beta,\alpha,\psi)$ be a compatible data and $g\in G$.
Then there is an ismorphism
$\ele(T,\beta,\alpha,\psi)\Box_{\Ac(V,u,G)}\ku_g\simeq\ku_{\alpha(g)}$ of left
$\Ac(V,u,G)$-comodules.
\end{lema}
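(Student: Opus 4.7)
The plan is to first show that $\ele(T,\beta,\alpha,\psi)\Box_{\Ac(V,u,G)}\ku_g$ is a one-dimensional (simple) comodule, and then identify it by exhibiting an explicit generator and reading off its left coaction. By Theorem \ref{class-hopf-bigal} together with the correspondence of Lemma \ref{b-c1}, $\ele(T,\beta,\alpha,\psi)$ is an $\Ac(V,u,G)$-biGalois object, so the tensor functor
$$\Fc_{\ele(T,\beta,\alpha,\psi)}:\Comod(\Ac(V,u,G))\to\Comod(\Ac(V,u,G))$$
from \eqref{monoidal-eq-hopf} is a monoidal equivalence.

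Since $\ku_g$ is tensor-invertible in $\Comod(\Ac(V,u,G))$ (with inverse $\ku_{g^{-1}}$, by Theorem \ref{th:proy-super}(3)), and a monoidal equivalence preserves invertible objects, the cotensor product $\ele(T,\beta,\alpha,\psi)\Box_{\Ac(V,u,G)}\ku_g$ is invertible as well. By Theorem \ref{th:proy-super}(1), every invertible object of $\Comod(\Ac(V,u,G))$ is isomorphic to some $\ku_h$, $h\in G$. Hence there exists a unique $h\in G$ with $\ele(T,\beta,\alpha,\psi)\Box_{\Ac(V,u,G)}\ku_g\simeq\ku_h$, and it only remains to determine $h$.

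To identify $h$, I would write $\ele(T,\beta,\alpha,\psi)=\ele(W,\beta,F,\psi)$ via Lemma \ref{b-c1}, and consider the distinguished element $e_{(\alpha(g),g)}\otk w_g$. From the explicit coactions in \eqref{r-l-coact} one has $\rho(e_{(\alpha(g),g)})=e_{(\alpha(g),g)}\ot g$, while $\lambda(w_g)=g\ot w_g$; comparing these shows that $e_{(\alpha(g),g)}\otk w_g$ lies in the cotensor product. Its left coaction is $\lambda(e_{(\alpha(g),g)})\otk w_g=\alpha(g)\ot e_{(\alpha(g),g)}\otk w_g$, so the map $w_{\alpha(g)}\mapsto e_{(\alpha(g),g)}\otk w_g$ defines a nonzero morphism of left $\Ac(V,u,G)$-comodules $\ku_{\alpha(g)}\to\ele(T,\beta,\alpha,\psi)\Box_{\Ac(V,u,G)}\ku_g$. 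Since both sides are simple one-dimensional, this morphism is the desired isomorphism. There is no real obstacle here beyond unpacking notation; the only point requiring care is keeping straight the two coactions \eqref{r-l-coact} inherited from the $\Bc(V,u,G)$-comodule structure on $\ele$.
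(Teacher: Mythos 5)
Your proof is correct, but it reaches the conclusion by a slightly different route than the paper. The paper's own argument is a one-line direct computation: for any $a\ot r$ in the cotensor product one has $\rho(a)=a\ot g$, so multiplying on the right by the invertible element $e_{(\alpha(g^{-1}),g^{-1})}$ produces a right coinvariant, which must lie in $\ku 1$ because the biGalois object has trivial coinvariants; hence $a$ is a scalar multiple of $e_{(\alpha(g),g)}$ and the left coaction on it is $\alpha(g)\ot(\,\cdot\,)$. You instead establish one-dimensionality by a categorical detour --- $\Fc_{\ele(T,\beta,\alpha,\psi)}$ is a monoidal equivalence, equivalences preserve invertible objects, and invertible objects of $\Comod(\Ac(V,u,G))$ are exactly the $\ku_h$ --- and then identify $h$ by exhibiting the generator $e_{(\alpha(g),g)}\otk w_g$, which is the same element the paper's computation produces. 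Both arguments are sound; the paper's has the advantage of being entirely self-contained (it even yields the dimension count for free from the trivial-coinvariants property, so no appeal to Theorem \ref{th:proy-super} or to preservation of invertibles is needed), while yours makes the structural reason for one-dimensionality transparent. Two small points of care in your write-up: the fact that $\ele(T,\beta,\alpha,\psi)$ is a biGalois object is the content of the unnumbered lemma preceding Theorem \ref{class-hopf-bigal} (that theorem states the converse classification), and the step ``invertible implies isomorphic to some $\ku_h$'' tacitly uses that an invertible comodule is one-dimensional, which follows from multiplicativity of $\dim_\ku$ under $\ot$.
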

\begin{proof} If $a\ot r\in\ele\Box_H\ku_g$ then $\rho(a)=a\ot g$, hence
 $$\rho(ae_{(\alpha(g^{-1}),g^{-1})})=(a\ot g)(e_{(\alpha(g^{-1}),g^{-1})}\ot g^{-1})
=ae_{(\alpha(g^{-1}),g^{-1})}\ot 1,$$
therefore $ae_{(\alpha(g^{-1}),g^{-1})}\in\ku 1=\ele(T,\beta,\alpha,\psi)^{\co \Ac(V,u,G)}$,
 and $a=\zeta e_{(\alpha(g),g)}$ for some $\zeta \in \ku$. \end{proof}

\subsection{A concrete example of biGalois extensions}\label{sec:exam-bigal}

Assume  $V$ is the 2-dimensional vector space generated by
$\{v_1, v_2\}$ and $G=C_2=<u>$ the cyclic group with two elements. Then, $V$ is a
$C_2$-module with action determined by declaring $u\cdot v_i=-v_i$ for $i=1,2$.

For any $\xi\in \ku$ define $T_\xi:V\to V$ the linear map
$$T_\xi(v_1)=v_1, \quad T_\xi(v_2)=\xi v_1 - v_2.$$

 By Lemma \ref{b-c1}, the compatible data $(T_\xi,0,\id,1)$
gives rise to a $\Ac(V,u,C_2)$-biGalois extension that we denote by $\textbf{U}_\xi$.
From Proposition \ref{bi-inn} (3) it follows that $\textbf{U}_\xi$ has order two, that is,
there is a bicomodule algebra isomorphism $\textbf{U}_\xi\Box_H \textbf{U}_\xi\simeq H.$

\section{Crossed product tensor categories}\label{section:crossed}

In this section $\ca$ will denote a strict finite tensor category.
We recall the definition of crossed system of a finite group $\Gamma$ on the tensor category
$\ca$ introduced in
\cite{Ga1} and the associated $\Gamma$-graded extension of $\ca$.

\begin{defi}\cite{Ga1}\label{def-cross-syst} Let $\Gamma$ be a finite group. \emph{A crossed system of $\Gamma$ over $\ca$}
is a collection $\Sigma=((a_*,\xi^a),(U_{a,b},\sigma^{a,b}),\gamma_{a,b,c})_{a,b,c\in \Gamma}$
consisting of
\begin{itemize}

\item Monoidal autoequivalences $(a_*, \xi^a):\ca\to\ca$
 where $\xi^a_{X,Y}:a_*(X\ot Y)\to a_*(X)\ot a_*(Y)$
 is the monoidal structure for $X,Y\in\ca$. We also require that $a_*(\uno)=\uno$;
\item Objects $U_{a,b}\in\ca$ and for any $X\in \ca$ natural isomorphisms
$$\sigma^{a,b}_X:a_*b_*(X)\ot U_{a,b}\to U_{a,b}\ot (ab)_*X, \quad X\in\ca;$$
\item isomorphisms $\gamma_{a,b,c}:a_*(U_{b,c})\ot U_{a,bc}\to U_{a,b}\ot U_{ab,c}$;
\end{itemize}
such that for all $a,b,c\in \Gamma$, $X,Y\in\ca$:
\begin{equation}\label{crossed-syst11} \sigma_\uno^{a,b}=\id_{U_{a,b}},\quad 1_*=\Id_{\ca},\quad
(U_{1,a},\sigma^{1,a})=(\uno,\id_{a_*})= (U_{a,1},\sigma^{a,1}),
\end{equation}
\begin{equation}\label{crossed-syst12}
\gamma_{a,1,b}=\gamma_{1,a,b}=\gamma_{a,b,1}=\id_{U_{a,b}},
\end{equation}

\begin{equation}\label{crossed-syst13}
(\id_{U_{a,b}}\ot \xi^{ab}_{X,Y})\sigma^{a,b}_{X\ot Y}=\end{equation}
$$(\sigma^{a,b}_X\ot \id_{(ab)_*(Y)})(\id_{a_*b_*(X)}\ot \sigma^{a,b}_Y)(\xi^a_{b_*X,b_*Y}a_*(\xi^b_{X,Y})\ot \id_{U_{a,b}}),$$
\begin{equation}\label{crossed-syst14}(\gamma_{a,b,c}\ot \id_{(abc)_*(X)})(\id_{a_*(U_{b,c})}\ot\sigma_X^{a,bc})
(\xi^a_{U_{b,c},(bc)_*(X)}\circ a_*(\sigma_X^{b,c})\ot \id_{U_{a,bc}})
=\end{equation}
$$=(\id_{U_{a,b}}\ot\sigma^{ab,c}_{X})
(\sigma^{a,b}_{c_*X}\ot \id_{U_{ab,c}})(\id_{a_*b_*c_*(X)}\ot\gamma_{a,b,c})
 (\xi^a_{b_*c_*(X),U_{bc}}\ot \id_{U_{a,bc}}).$$

\end{defi}

\begin{rmk} 1. Condition \eqref{crossed-syst13} of  Definition (\ref{def-cross-syst})
 implies that $(U_{a,b},\sigma^{a,b})$ is a pseudo-natural isomorphism in the bicategory $\cab$
with only one object.
In particular the object $U_{a,b}$ is invertible in $\ca$ with inverse $\overline{U_{a,b}}$.

2. Condition \eqref{crossed-syst14} implies that $\gamma_{a,b,c}$ is an invertible modification in the same bicategory.
\end{rmk}

\begin{defi}  A crossed system $\Sigma =((a_*,\xi^a),(U_{a,b},\sigma^{a,b}),\gamma_{a,b,c})_{a,b,c\in \Gamma}$
is a \emph{coherent outer $\Gamma$-action}
on $\ca$ if for all $a,b,c,d\in \Gamma$
\begin{equation}\label{eq-acts}(\gamma_{a,b,c}\ot \id_{U_{abc,d}})(\id_{a_*(U_{b,c})}\ot \gamma_{a,bc,d})(\xi^a_{U_{bc},U_{bc,d}} a_*(\gamma_{b,c,d})\ot \id_{U_{a,bcd}})=\end{equation}
$$(\id_{U_{a,b}}\ot\gamma_{ab,c,d})
(\sigma^{a,b}_{U_{cd}}\ot \id_{U_{ab,cd}})(\id_{a_*b_*(U_{cd})}\ot
\gamma_{a,b,cd})(\xi^a_{b_*(U_{c,d}),U_{b,cd}}\ot \id_{U_{a,bcd}}).$$
In this case, we say that $\Gamma$ \emph{acts} on the category $\ca$.
\end{defi}

If $\Gamma$ acts on $\ca$ via a crossed system $\Sigma$, then
the \emph{$\Gamma$-crossed product tensor category}, introduced in \cite{Ga1}, associated to this action
is $\ca(\Sigma)$, where $\ca(\Sigma)=\oplus_{a\in \Gamma}\ca_a$ as Abelian categories and
$\ca_a=\ca$ for all $a\in \Gamma$. Denote by $[V,a]$ the object $V\in\ca_a$. Morphisms
 from $\oplus_{a\in \Gamma}[V_a,a]$ to $\oplus_{a\in \Gamma}[W_a,a]$ are given by $\oplus_{a\in \Gamma}[f_a,a]$
where $f_a:V_a\to W_a$ is a morphism in $\ca$ for all $a\in \Gamma$.

\begin{teo}\cite[Sec. 3.3]{Ga1}\label{ten-prod} $\ca(\Sigma)$ is a tensor category with tensor product
 $\ot:\ca(\Sigma)\times \ca(\Sigma)\to\ca(\Sigma)$ defined by
\begin{align}
[V,a]\ot[W,b]&=[V\ot a_*(W)\ot U_{a,b},ab] \text{ on objects},\\
[f,a]\ot[g,b]&=[f\ot a_*(g)\ot \id_{U_{a,b}},ab] \text{ on morphisms},
\end{align}
with unit object $[\uno_{\ca}, 1]$, and associativity constraints given by
\begin{align}
\alpha_{[V,a][W,b][Z,c]}&=(\id_{V\ot a_*W}\ot\sigma^{a,b}_Z\ot \id_{U_{ab,c}})(\id_{V\ot a_*W\ot a_*b_*Z}\ot\gamma_{a,b,c})\circ\end{align}
$$(\id_{V\ot a_*W}\ot \xi^a_{b_*Z,U_{b,c}}\ot \id_{U_{a,bc}})(\id_V\ot\xi^a_{W,b_*Z\ot U_{b,c}}\ot \id_{U_{a,bc}}).$$

The dual objects are given by $$([V,1])^*=[V^*,1]\text{ and }
([\uno,a])^*=[\overline{U_{a,a^{-1}}},a^{-1}].$$\qed
\end{teo}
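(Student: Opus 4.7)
The plan is to verify in turn each defining axiom of a tensor category, exploiting the explicit formulas for $\ot$ and $\alpha$ in the statement together with the conditions \eqref{crossed-syst11}--\eqref{eq-acts}.

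First I would establish bifunctoriality of $\ot$ on $\ca(\Sigma)$. Compatibility with composition of morphisms and with identities is routine given that each $a_*:\ca\to\ca$ is a functor and that $\ca$ is itself tensor. The normalization conditions \eqref{crossed-syst11} and \eqref{crossed-syst12} then trivialize the unit axioms: tensoring with $[\uno_{\ca},1]$ on either side reinstates the original object, since $1_*=\Id_{\ca}$ and $U_{1,a}=U_{a,1}=\uno$ with identity $\sigma$'s, and the triangle axiom reduces to the triangle axiom of $\ca$.

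The heart of the proof is the pentagon axiom for $\alpha$. For four objects $[V,a],[W,b],[Z,c],[T,d]$ I would expand both sides of the pentagon using the given formula for $\alpha$; the resulting identity decomposes into three compatibilities. Condition \eqref{crossed-syst13} is used to move the monoidal structures $\xi^a$ past the natural isomorphisms $\sigma^{a,b}$; condition \eqref{crossed-syst14} lets us exchange instances of $\sigma$ and $\gamma$; and the coherent outer action condition \eqref{eq-acts} consolidates the remaining $\gamma_{a,b,c}$'s. The bookkeeping is the main obstacle, since five associators unfold into many terms, each dressed with the monoidal structures $\xi^a$ and coactions $\sigma^{a,b}$. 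The conceptually cleanest route, already implicit in the two remarks after Definition~\ref{def-cross-syst}, is to regard the triple $((a_*,\xi^a),(U_{a,b},\sigma^{a,b}),\gamma_{a,b,c})$ as a monoidal pseudo-functor from the one-object bicategory attached to $\Gamma$ into $\cab$; with this viewpoint \eqref{crossed-syst13}--\eqref{eq-acts} are exactly the axioms of such a pseudo-functor, and the pentagon for $\ca(\Sigma)$ becomes its coherence.

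Finally, for the duals the identification $[V,1]^*=[V^*,1]$ is immediate because $\ca_1\simeq\ca$ is a tensor subcategory with unit $[\uno,1]$, so rigidity of $\ca$ transports directly. For $[\uno,a]^*=[\overline{U_{a,a^{-1}}},a^{-1}]$ I would unwind
\[
[\uno,a]\ot[\overline{U_{a,a^{-1}}},a^{-1}]=[a_*(\overline{U_{a,a^{-1}}})\ot U_{a,a^{-1}},1],
\]
and construct evaluation and coevaluation morphisms from the invertibility data $\overline{U_{a,a^{-1}}}\ot U_{a,a^{-1}}\simeq \uno\simeq U_{a,a^{-1}}\ot \overline{U_{a,a^{-1}}}$ (noted after Definition~\ref{def-cross-syst}) combined with $\sigma^{a,a^{-1}}$. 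The zig-zag identities then follow by applying \eqref{crossed-syst11}, \eqref{crossed-syst12} and \eqref{crossed-syst14} at the appropriate points. Duals of arbitrary $[V,a]$ are then produced from the decomposition $[V,a]\simeq [V,1]\ot[\uno,a]$ together with the standard formula for the dual of a tensor product.
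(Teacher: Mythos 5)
The paper offers no proof of this statement: it is quoted verbatim from \cite[Sec.\ 3.3]{Ga1} and closed with \qed, so there is nothing internal to compare your argument against. That said, your plan is the natural one and is essentially the verification carried out in the cited source: bifunctoriality and the unit axioms from \eqref{crossed-syst11}--\eqref{crossed-syst12} together with $a_*(\uno)=\uno$, and the pentagon from \eqref{crossed-syst13}, \eqref{crossed-syst14} and \eqref{eq-acts}; your observation that the crossed system is exactly the data of a pseudo-functor into $\cab$ (monoidal functors, pseudo-natural isomorphisms $(U_{a,b},\sigma^{a,b})$, modifications $\gamma_{a,b,c}$) is the right organizing principle, even though the pentagon still has to be checked by hand rather than deduced from abstract coherence alone. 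The one thin spot is the dual of $[\uno,a]$: after unwinding $[\uno,a]\ot[\overline{U_{a,a^{-1}}},a^{-1}]=[a_*(\overline{U_{a,a^{-1}}})\ot U_{a,a^{-1}},1]$, the invertibility of $U_{a,a^{-1}}$ alone does not identify this with $[\uno,1]$ --- one needs the constraint data to compare $a_*(U_{a^{-1},a})$ (which $\gamma_{a,a^{-1},a}$ identifies with $U_{a,a^{-1}}$, using \eqref{crossed-syst12}) with what actually appears, and to keep track of the distinction between $U_{a,a^{-1}}$ and $U_{a^{-1},a}$ when checking both zig-zag identities. You do invoke $\sigma^{a,a^{-1}}$ here, which is the right ingredient, but this is the step that deserves to be written out rather than asserted; the rest of the outline, including producing general duals from $[V,a]\simeq[V,1]\ot[\uno,a]$, is sound.
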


The next result explains when, for two coherent outer actions
$\Sigma, \Sigma'$,  the tensor categories $\ca(\Sigma)$, $\ca(\Sigma')$
are monoidally equivalent.

\begin{teo}\cite[Th. 4.1]{Ga1}\label{th:eq-mon} Let $\Sigma=((a_*,\varrho^a),(U_{a,b},\sigma^{a,b}),\gamma_{a,b,c})_{a,b,c\in \Gamma},
\Sigma'=((a',\zeta^a),(U'_{a,b},\tau^{a,b}),\gamma'_{a,b,c})_{a,b,c\in \Gamma}$ be two
coherent outer $\Gamma$-actions over $\ca$.
 Any monoidal equivalence
$F:\ca(\Sigma)\to \ca(\Sigma')$ comes from a collection

 $((H,\xi),f,(\theta_a,\beta^a), \chi_{a,b})_{a,b\in\Gamma}$ where:
\begin{itemize}
\item $(H,\xi):\ca\to\ca$ is a monoidal equivalence;
\item $f:\Gamma\to\Gamma$ is a group isomorphism;
\item for any $a\in \Gamma$ the pair $(\theta_a,\beta^a):H\circ a_*\to f(a)'\circ H$ is a pseudo-natural isomorphism
 such that $(\theta_1,\beta^1)=(\uno,\id)$;
\item $\chi_{a,b}:H(U_{a,b})\ot\theta_{ab}\to \theta_a\ot f(a)'(\theta_b)\ot U'_{f(a),f(b)}$ is an invertible morphism in $\ca$ such that $\chi_{a,1}=\chi_{1,a}=\id_{\theta_a}$ and
    \begin{equation}\label{eq:chi-general}
    p_V(\id_{H(a_*b_*(V))}\ot\chi_{a,b})=(\chi_{a,b}\ot\id_{f(ab)'(H(V))})q_V, \quad  V\in\ca,\end{equation}
    where
    \begin{align*}
    p_V&=(\id_{\theta_a}\ot(\id_{f(a)'(\theta_b)}\ot\tau^{f(a),f(b)}_{H(V)})(s_V\ot\id_{U'_{f(a),f(b)}}))\circ\\
    & (\beta^a_{b_*(V)}\ot\id_{f(a)'(\theta_b)\ot U'_{f(a),f(b)}}),\\
    s_V&=\zeta^{f(a)}_{\theta_b,f(b)'(H(V))}\circ f(a)'(\beta^b)\circ (\zeta^{f(a)}_{H(b_*(V)),\theta_b})^{-1},\\
    q_V&=(\id_{H(U_{a,b})}\ot\beta^{ab}_V) (\xi_{U_{a,b},(ab)_*(V)}\circ H(\sigma^{ab}_V)\circ ({\xi}_{a_*b_*(V),U_{ab}})^{-1}\ot\id_{\theta_{ab}}).\end{align*}
\qed
\end{itemize}
\end{teo}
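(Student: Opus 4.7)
The plan is to extract the data $((H,\xi), f, (\theta_a,\beta^a), \chi_{a,b})$ directly from a given monoidal equivalence $F: \ca(\Sigma) \to \ca(\Sigma')$ by probing it on the two families of ``basic'' objects $[V, 1]$ and $[\uno, a]$, and then deducing the coherence identity \eqref{eq:chi-general} from the hexagon axiom satisfied by the monoidal structure $\xi$ of $F$.

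First I would construct $f$ and $(H, \xi)$. By Theorem \ref{ten-prod} each object $[\uno, a]$ is invertible in $\ca(\Sigma)$, so $F([\uno, a])$ is invertible in $\ca(\Sigma')$ and is therefore isomorphic to $[\theta_a, f(a)]$ for a unique $f(a) \in \Gamma$ and some invertible $\theta_a \in \ca$; one normalizes $\theta_1 = \uno$. Applying the monoidal structure of $F$ to $[\uno, a] \otimes [\uno, b]$ shows that $f$ is a group homomorphism, and bijectivity follows from $F$ being an equivalence, so $f:\Gamma\to\Gamma$ is a group isomorphism. Since the $\Gamma$-grading on $\ca(\Sigma)$ is faithful, $F$ sends $\ca_a$ into $\ca'_{f(a)}$, and its restriction to the trivial component is then a monoidal autoequivalence $(H, \xi)$ of $\ca$ defined by $F([V, 1]) = [H(V), 1]$.

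Next I would produce $(\theta_a, \beta^a)$ and $\chi_{a,b}$. Using the identities $[V, 1] \otimes [\uno, a] = [V, a]$ and $[\uno, a] \otimes [V, 1] = [a_*(V), a]$, the monoidal structure $\xi$ of $F$ yields two presentations of $F([a_*(V), a])$:
$$[H(a_*(V)) \otimes \theta_a,\, f(a)] \;\simeq\; F([a_*(V), a]) \;\simeq\; [\theta_a \otimes f(a)'(H(V)),\, f(a)].$$
The resulting comparison is the candidate $\beta^a_V$; its naturality in $V$ and the pseudo-natural transformation axiom \eqref{pseudo-nat-m} follow from naturality of $\xi$ together with its compatibility with the associators of $\ca(\Sigma)$ and $\ca(\Sigma')$. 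In the same spirit, comparing the two presentations of $F([U_{a,b}, ab])$ coming from the decompositions $[\uno, a] \otimes [\uno, b]$ and $[U_{a,b}, 1] \otimes [\uno, ab]$ produces the desired morphism $\chi_{a,b}: H(U_{a,b}) \otimes \theta_{ab} \to \theta_a \otimes f(a)'(\theta_b) \otimes U'_{f(a), f(b)}$, with the normalization $\chi_{a,1} = \chi_{1,a} = \id$ inherited from the unitality of $\xi$.

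Finally I would verify \eqref{eq:chi-general}. Apply $\xi$ to the triple tensor product $[\uno, a] \otimes [\uno, b] \otimes [V, 1]$ rearranged by the associator of $\ca(\Sigma)$, and match it against $F$ of the iterated tensor product expressed through the associator of $\ca(\Sigma')$; after substituting the definitions of $\beta^a$, $\chi_{a,b}$, $\sigma^{a,b}$ and $\tau^{f(a), f(b)}$, the two sides collapse to the morphisms $q_V$ and $p_V$, respectively. The main obstacle lies precisely in this last computation: the diagram simultaneously involves the structure morphisms $\sigma^{a,b}$, $\tau^{f(a), f(b)}$, $\xi^a$ and $\zeta^{f(a)}$, and one must unwind the hexagon axiom for $\xi$ carefully to recover \eqref{eq:chi-general} on the nose. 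A secondary subtle point is Step 1, where the fact that $F$ respects the grading (up to $f$) relies on the faithfulness of the $\Gamma$-grading on $\ca(\Sigma)$, a property to be extracted from the definition in Theorem \ref{ten-prod}.
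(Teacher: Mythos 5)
First, a caveat about the comparison itself: the paper does not prove this theorem. It is imported verbatim from \cite[Th. 4.1]{Ga1} and stated with the proof omitted, so there is no in-paper argument to measure your proposal against. Judged on its own, your outline is the natural reconstruction and matches the shape of Galindo's argument: invertibility of $[\uno,a]$ forces $F([\uno,a])$ to be homogeneous, which yields $f$ and $\theta_a$; the two factorizations $[\uno,a]\ot[V,1]=[a_*(V),a]=[a_*(V),1]\ot[\uno,a]$ yield $\beta^a$; the two factorizations of $[U_{a,b},ab]$ yield $\chi_{a,b}$; and \eqref{eq:chi-general} is the compatibility of the monoidal structure of $F$ with the associators evaluated on $[\uno,a],[\uno,b],[V,1]$, which is indeed where $\sigma^{a,b}_V$ and $\tau^{f(a),f(b)}_{H(V)}$ enter through $q_V$ and $p_V$.

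The genuine gap is the sentence ``since the $\Gamma$-grading on $\ca(\Sigma)$ is faithful, $F$ sends $\ca_a$ into $\ca'_{f(a)}$.'' Faithfulness of the grading does not imply this, and the step cannot be repaired in the stated generality: a faithful $\Gamma$-grading is not intrinsic (only the universal grading is preserved by arbitrary tensor equivalences), so a monoidal equivalence need not carry $\ca_1$ into $\ca'_1$. Concretely, take $\ca=\Vect_{C_2}$, $\Gamma=C_2$ and the trivial crossed system, so that $\ca(\Sigma)\simeq \Vect_{C_2\times C_2}$ graded by the second coordinate; the autoequivalence induced by swapping the two $C_2$-factors sends the trivial component to a subcategory which is not contained in any homogeneous component, hence is not of the form $[V,a]\mapsto [H(V)\ot\theta_a,f(a)]$. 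So either the theorem must be read as a statement about \emph{graded} equivalences (which is how the cited source should be interpreted, and how the authors use it later when distinguishing the categories in \eqref{ex-tc}), or an additional argument specific to the categories at hand is required. Note that homogeneity of the objects $F([\uno,a])$, which you do establish, gives you $f$ on the subgroup generated by the supports but does not by itself give $F(\ca_1)\subseteq\ca'_1$, which is what the rest of your construction (the definition of $H$, and hence of $\beta^a$ and $\chi_{a,b}$) depends on. Once that hypothesis is added, the remaining steps of your outline are routine and correct.
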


Given the collection $((H,\xi),f,(\theta_a,\beta^a), \chi_{a,b})_{a,b\in\Gamma}$ as in the previous Theorem, 
the monoidal equivalence $F:\ca(\Sigma)\to \ca(\Sigma')$ is defined by
$$F([V,a])=[H(V)\ot\theta_a,f(a)],\quad [V,a]\in \ca(\Sigma),$$
for any $[V,a] \in \ca(\Sigma)$.

\begin{rmk} 
  In \cite{Ga1} the author defines crossed systems in terms of equivalence
classes of monoidal functors, up to monoidal isomorphisms, and equivalence classes
of pseudo-natural isomorphisms, up to invertible modifications. This is done this way
since it is shown that equivalence classes of crossed product extensions of the tensor category
$\ca$ by the group $\Gamma$ are classified by crossed systems. Since we are only interested in giving examples,
our definition of crossed systems is a \emph{representative} of a crossed systems according to \cite{Ga1}.
\end{rmk}

\subsection{Coherent outer actions for the corepresentation category of a
 Hopf algebra}
Let $H$ be a finite-dimensional Hopf algebra.
We shall give an explicit description for coherent outer actions on the
tensor category $\Comod(H)$ of finite-dimensional left $H$-comodules in terms of
Hopf algebraic data.
Let $\Gamma$ be a finite group. \medbreak

Let us fix the following notation. If $g\in G(H)$ and $L$ is a
$(H,H)$-biGalois object then the cotensor product $L\Box_H \ku_g$
is one-dimensional. Let $\phi(L,g)\in \Gamma$ be the group-like element such that
$L\Box_H \ku_g\simeq \ku_{\phi(L,g)}$ as left $H$-comodules.

\begin{lema}\label{lm:cros-sys-comod} Assume that 
 $\Upsilon=(L_a,(g(a,b),f^{a,b}),\gamma_{a,b,c})_{a,b,c\in \Gamma}$ is a collection where
\begin{itemize}
\item   for any $a\in \Gamma$, $L_a$ is a $(H,H)$-biGalois object;
\item  $g(a,b)\in G(H)$  is a group-like element and  $f^{a,b}:(L_a\Box_H L_{b})^{g(a,b)}\to L_{ab}$
are bicomodule algebra isomorphisms;
\item $\gamma_{a,b,c}\in\ku^{\times}$,
\end{itemize}
such that:
\begin{equation}\label{cross-hopf-11}
L_1=H, \quad (g(1,a),f^{1,a})=(1,\id_{L_a})= (g(a,1),f^{a,1});
\end{equation}
\begin{equation}\label{cross-hopf-12}\phi(L_a,g(b,c)) g(a,bc)= g(a,b) g(ab,c);
\end{equation}
\begin{equation}\label{cross-hopf-13}
\gamma_{a,1,b}=\gamma_{1,a,b}=\gamma_{a,b,1}=1;
\end{equation}
\begin{equation}\label{cross-hopf-14}
(f^{a,b}\ot\id_{L_c})f^{ab,c}=(\id_{L_a}\ot f^{b,c})f^{a,bc},
\end{equation}
 for all $a,b,c\in \Gamma$. Associated to such  $\Upsilon$ there is a crossed system $\overline{\Upsilon}$
of $\Gamma$ over $\Comod(H)$.
Moreover, the crossed system $\overline{\Upsilon}$ is a coherent outer action
  on $\Comod(H)$ if and only if $\gamma$ is a 3-cocycle, that is, for all $a,b,c,d\in\Gamma$
\begin{equation}\label{3-cocy-gamma}
\gamma_{a,b,c}\gamma_{a,bc,d}\gamma_{b,c,d}=\gamma_{ab,c,d}\gamma_{a,b,cd}.
\end{equation}
\end{lema}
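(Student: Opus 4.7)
The plan is to translate each piece of the data $\Upsilon$ into the corresponding piece of a crossed system on $\Comod(H)$. For each $a\in\Gamma$ set $(a_*,\xi^a)=(\Fc_{L_a},\xi^{L_a})$ as in \eqref{monoidal-eq-hopf}--\eqref{iso-gal}; since $L_a$ is a biGalois object, this is a monoidal autoequivalence, and $L_1=H$ yields $1_*=\Id$. Set $U_{a,b}=\ku_{g(a,b)}$, which is invertible in $\Comod(H)$. The bicomodule algebra isomorphism $f^{a,b}:(L_a\Box_H L_b)^{g(a,b)}\to L_{ab}$ produces, via Remark \ref{pseudo=iso-comod}, a pseudo-natural isomorphism $(\ku_{g(a,b)},\eta^{f^{a,b}}):\Fc_{L_a\Box_H L_b}\to \Fc_{L_{ab}}$. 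Under the canonical identification $\Fc_{L_a\Box_H L_b}\simeq a_*\circ b_*$ coming from associativity of the cotensor product, this yields the natural isomorphisms $\sigma^{a,b}_X:=\eta^{f^{a,b}}_X$ required by Definition \ref{def-cross-syst}. Finally, by Lemma \ref{lm:iso-esp} one has $a_*(U_{b,c})\simeq \ku_{\phi(L_a,g(b,c))}$, and condition \eqref{cross-hopf-12} shows that $a_*(U_{b,c})\otimes U_{a,bc}$ and $U_{a,b}\otimes U_{ab,c}$ are both isomorphic to $\ku_{g(a,b)g(ab,c)}$; take the categorical morphism $\gamma_{a,b,c}$ to be the scalar $\gamma_{a,b,c}\in\ku^\times$ times the unique normalized comodule isomorphism between them.

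With the data in place, I would verify the crossed-system axioms. Axioms \eqref{crossed-syst11} and \eqref{crossed-syst12} follow from \eqref{cross-hopf-11} and \eqref{cross-hopf-13} together with the choice $U_{1,a}=\ku_1=\uno$. Axiom \eqref{crossed-syst13} is precisely the pseudo-naturality condition \eqref{pseudo-nat-m} for $(\ku_{g(a,b)},\eta^{f^{a,b}})$, hence automatic. Axiom \eqref{crossed-syst14} requires more care: unwinding the explicit form of $\eta^{f^{a,b}}$ given in Remark \ref{pseudo=iso-comod}, the two compositions in \eqref{crossed-syst14} correspond to the two associated ways of identifying $\Fc_{L_a\Box_H L_b\Box_H L_c}$ with $\Fc_{L_{abc}}$, namely through $(f^{a,b}\otimes\id_{L_c})f^{ab,c}$ on one side and through $(\id_{L_a}\otimes f^{b,c})f^{a,bc}$ on the other. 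Condition \eqref{cross-hopf-14} states that these are equal, so after factoring out the common scalar $\gamma_{a,b,c}$ both sides of \eqref{crossed-syst14} coincide, producing the crossed system $\overline{\Upsilon}$.

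For the second assertion, I would expand \eqref{eq-acts}. Both the source and target of the two compositions in \eqref{eq-acts} reduce, via Lemma \ref{lm:iso-esp} and iterated use of \eqref{cross-hopf-12}, to the same one-dimensional comodule $\ku_h$; all the morphisms $\xi^a$, $a_*(\gamma_{\cdot,\cdot,\cdot})$ and $\sigma^{\cdot,\cdot}$ appearing there are canonical isomorphisms between such $\ku_h$'s, while each occurrence of $\gamma_{\cdot,\cdot,\cdot}$ contributes an additional scalar factor from $\ku^\times$. Collecting scalars, the left-hand side equals $\gamma_{b,c,d}\,\gamma_{a,bc,d}\,\gamma_{a,b,c}$ times the canonical isomorphism, while the right-hand side equals $\gamma_{a,b,cd}\,\gamma_{ab,c,d}$ times the same isomorphism. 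Thus \eqref{eq-acts} is equivalent to \eqref{3-cocy-gamma}, proving the ``if and only if''.

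The main obstacle is step two, specifically \eqref{crossed-syst14}: one must unravel Remark \ref{pseudo=iso-comod} to express $\sigma^{a,b}_X$ as the concrete map $a\otimes v\otimes r\mapsto r\otimes f^{a,b}(a)\otimes v$ on cotensor products, and then track how twofold applications of this formula combine via the monoidal structures $\xi^a$ of $\Fc_{L_a}$. Condition \eqref{cross-hopf-14} is exactly what forces the two possible reassociations of the threefold cotensor product $L_a\Box_H L_b\Box_H L_c$ to yield the same natural transformation on $a_*b_*c_*(X)$. Once this bookkeeping is settled, the coherence characterization in terms of the 3-cocycle condition is merely scalar arithmetic on one-dimensional comodules.
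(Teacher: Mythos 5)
Your proposal is correct and follows essentially the same route as the paper's proof: the same dictionary ($a_*=L_a\Box_H-$, $U_{a,b}=\ku_{g(a,b)}$, $\sigma^{a,b}$ from $f^{a,b}$ via Remark \ref{pseudo=iso-comod}, $\gamma_{a,b,c}$ as a scalar on one-dimensional comodules via \eqref{cross-hopf-12}), and the same matching of axioms \eqref{crossed-syst11}--\eqref{crossed-syst14} and \eqref{eq-acts} with \eqref{cross-hopf-11}--\eqref{cross-hopf-14} and \eqref{3-cocy-gamma}. You supply somewhat more detail than the paper on why \eqref{crossed-syst14} reduces to \eqref{cross-hopf-14} after the $\gamma_{a,b,c}$ factors cancel, but the argument is the same.
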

\begin{proof} For any $a, b\in \Gamma$ define the monoidal functor
$a_*:\Comod(H)\to \Comod(H)$, $a_*=L_a\Box_H -$ and $U_{a,b}=\ku_{g(a,b)}$.

 \medbreak

Define the pseudo-natural isomorphism
$(\ku_{g(a,b)},\sigma^{a,b}):a_*\circ b_* \to (ab)_*$
which comes from the  bicomodule algebra isomorphism
$$f^{a,b}:(L_a\Box_H L_{b})^{g(a,b)}\to L_{ab}$$ as explained in
Remark \ref{pseudo=iso-comod}. \medbreak

The existence of the  isomorphisms
$\gamma_{a,b,c}: L_a \Box_H \ku_{g(b,c)} \to \ku_{g(a,b) g(ab,c)}$
is equivalent to $\phi(L_a,g(b,c)) g(a,bc)= g(a,b) g(ab,c)$. Since both vector spaces
 $L_a\Box_H \ku_{g(b,c)}\ot\ku_{g(a,bc)}$ and $\ku_{g(a,b)}\ot\ku_{g(ab,c)}$
are one-dimensional, the map  $\gamma_{a,b,c}:\ku\to\ku$
is given by multiplication of a scalar $\gamma_{a,b,c}\in\ku^{\times}$.

Equation \eqref{crossed-syst11} is equivalent to
 \eqref{cross-hopf-11},  \eqref{crossed-syst12} is equivalent to  \eqref{cross-hopf-13},
and Equation \eqref{crossed-syst14} is equivalent to \eqref{cross-hopf-14}.
Since $f^{a,b}$ is an algebra morphism then Equation \eqref{crossed-syst13} is satisfied.
Equation
\eqref{3-cocy-gamma} follows from \eqref{eq-acts}.
\end{proof}

\begin{defi} Given a collection $\Upsilon$ as in the previous Lemma,
define $\Comod(H)(\Upsilon):=\Comod(H)(\overline{\Upsilon})$ the
 $\Gamma$-crossed product tensor category
associated to the coherent outer action$\overline{\Upsilon}$.
\end{defi}
The next Lemma is a direct consequence of the Theorem
\ref{th:eq-mon} applied to $\ca=\Comod(H)$.

Assume  that
$\Upsilon=(L_a,(g(a,b),f^{a,b}),\gamma_{a,b,c})_{a,b,c\in\Gamma}$ and

$\Upsilon'=(L_a',(g'(a,b),z^{a,b}),\gamma'_{a,b,c})_{a,b,c\in\Gamma}$
are collections satisfying conditions given in  Lemma \ref{lm:cros-sys-comod}. 
Thus, the associated objects $\overline\Upsilon,\overline{\Upsilon'}$ are coherent outer $\Gamma$-actions).

\begin{lema}\label{lm:S-S'-comod}

 Any monoidal equivalence $F:Comod(H)(\Upsilon)\to Comod(H)(\Upsilon')$
 comes from a collection    $(L,\lambda,(h(a),h^a),\tau_{a,b})_{a,b\in\Gamma}$ where
\begin{itemize}
\item $L$ is a $(H,H)$-biGalois object,
\item $\lambda:\Gamma\to\Gamma$ is a group isomorphism,
\item $h(a)\in G(H)$ is a group-like and $h^a:(L\Box_H L_a)^{h(a)}\to L_{\lambda(a)}'\Box_H L$ is a 
biGalois object isomorphism, satisfying $(h(1),h^1)=(1,\id)$,
\item $\tau_{a,b}\in\ku^{\times}$ satisfies $\tau_{a,1}=\tau_{1,a}=1$,
\end{itemize}
and also the following equations are fulfilled
\begin{equation}\label{eq:tau0}
\phi(L,g(a,b))h(ab)=h(a)\phi(L_{\lambda(a)}',h(b))g'(\lambda(a),\lambda(b)),\end{equation}
\begin{equation}\label{eq:tau}
h^{ab}(\id_L\ot f^{a,b})=(z^{\lambda(a),\lambda(b)}\ot\id_L)(\id_{L_{\lambda(a)}'}\ot h^b)(h^a\ot\id_{L_b}).\end{equation}
\end{lema}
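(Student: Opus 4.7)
The plan is to invoke Theorem \ref{th:eq-mon} with $\ca = \Comod(H)$ and to translate each piece of the resulting bicategorical datum into Hopf-theoretic data, using the dictionary developed in Section \ref{subsection:hopfbg}. Concretely, by that theorem any monoidal equivalence $F : \Comod(H)(\Upsilon) \to \Comod(H)(\Upsilon')$ arises from a collection $((H_0, \xi), f, (\theta_a, \beta^a), \chi_{a,b})_{a,b\in\Gamma}$, with $(H_0,\xi)$ a monoidal autoequivalence of $\Comod(H)$, $f:\Gamma\to\Gamma$ a group isomorphism, $(\theta_a,\beta^a):H_0\circ a_*\to f(a)_*\circ H_0$ a pseudo-natural isomorphism with $(\theta_1,\beta^1)=(\uno,\id)$, and $\chi_{a,b}$ an invertible morphism satisfying the normalizations and equation \eqref{eq:chi-general}. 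Everything in the lemma will be obtained by rewriting these four pieces of data in terms of biGalois objects.

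First I would translate $H_0$: by the discussion around \eqref{monoidal-eq-hopf}, $H_0$ is naturally monoidally isomorphic to $\Fc_L = L\Box_H -$ for some $(H,H)$-biGalois object $L$. Since $a_*=\Fc_{L_a}$ and $f(a)_*=\Fc_{L'_{f(a)}}$, the pseudo-natural isomorphism $(\theta_a,\beta^a)$ goes between tensor functors of the form $\Fc_{L\Box_H L_a}$ and $\Fc_{L'_{f(a)}\Box_H L}$. By Theorem \ref{pseudo-nat-e} and Remark \ref{pseudo=iso-comod}, such a pseudo-natural isomorphism is induced by a bicomodule algebra isomorphism $h^a:(L\Box_H L_a)^{h(a)}\to L'_{\lambda(a)}\Box_H L$, with $\lambda:=f$ and $h(a)\in G(H)$, and forces $\theta_a=\ku_{h(a)}$. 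The normalization $(\theta_1,\beta^1)=(\uno,\id)$ becomes $(h(1),h^1)=(1,\id)$.

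Next, $\chi_{a,b}$ is a morphism between the one-dimensional $H$-comodules $H_0(\ku_{g(a,b)})\otk\ku_{h(ab)}\simeq\ku_{\phi(L,g(a,b))h(ab)}$ and $\ku_{h(a)}\otk f(a)_*(\ku_{h(b)})\otk\ku_{g'(\lambda(a),\lambda(b))}\simeq\ku_{h(a)\phi(L'_{\lambda(a)},h(b))g'(\lambda(a),\lambda(b))}$, where the group-like labels are computed via Lemma \ref{lm:iso-esp}. For $\chi_{a,b}$ to exist as an isomorphism, these labels must coincide, which is precisely \eqref{eq:tau0}; once it does, $\chi_{a,b}$ amounts to a scalar $\tau_{a,b}\in\ku^{\times}$, and $\chi_{a,1}=\chi_{1,a}=\id_{\theta_a}$ becomes $\tau_{a,1}=\tau_{1,a}=1$. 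Finally I would unravel \eqref{eq:chi-general} using the explicit form of $\beta^a$ and $\xi^a$ provided by Remark \ref{pseudo=iso-comod} together with the formulas \eqref{pseudo-nat-comp} and \eqref{prod-ten-pseudonat} for composition and tensor product of pseudo-natural transformations, and identify the resulting identity between bicomodule algebra maps as \eqref{eq:tau}.

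The main obstacle will be the last step: carefully tracking how the twistings $(-)^{h(a)}$, the cotensoring with $L$, and the monoidal structures $\xi$, $\zeta^{f(a)}$, $\tau^{f(a),f(b)}$ assemble so that \eqref{eq:chi-general}, evaluated on a generator of the one-dimensional comodule $\ku_{h(ab)}$, collapses to exactly the bicomodule identity \eqref{eq:tau}, rather than to a version twisted by additional scalars (which would have to be absorbed into $\tau_{a,b}$). The rest of the argument is a direct application of the dictionary already established in Section \ref{subsection:hopfbg}.
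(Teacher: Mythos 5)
Your proposal is correct and follows essentially the same route as the paper, which presents this lemma as a direct consequence of Theorem \ref{th:eq-mon} applied to $\ca=\Comod(H)$ and leaves the translation implicit. Your dictionary --- monoidal autoequivalences as $\Fc_L$, pseudo-natural isomorphisms via Theorem \ref{pseudo-nat-e} and Remark \ref{pseudo=iso-comod}, the group-like matching condition \eqref{eq:tau0} from the one-dimensionality of the (co)modules involved, and the unravelling of \eqref{eq:chi-general} into \eqref{eq:tau} --- is exactly the intended argument.
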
\qed

\section{Examples of $C_2$-extensions of $\Comod(\Ac(V,u,C_2))$}\label{section:crossed-spg}

Let $C_2$ be the cyclic group of 2 elements. 
In this section we shall give  explicit examples of tensor categories that are $C_2$-extensions of
the tensor category $\Comod(\Ac(V,u,C_2))$ with $V$ a 2-dimensional vector space.

\subsection{$C_2$-extensions of $\Comod(H)$}\label{pcs}

Let $H$ be a finite-dimensional Hopf algebra.
First, we explicitly describe data giving rise to $C_2$-extensions of $\Comod(H)$
in the particular case the group of group-like elements of the Hopf algebra $H$ is a cyclic group
of order 2 generated by $u$.

\medbreak

Assume that $(L,g,f,\gamma)$ is  a collection where
\begin{itemize}
 \item $L$ is a $(H,H)$-biGalois object;

\item $g\in G(H)$ is a group-like element such that $\varpi:L\Box_H \ku_g\simeq \ku_g$ as left $H$-comodules;

\item $f:(L\Box_H L)^g\to H$ is a bicomodule algebra isomorphism and
\item  $\gamma\in\ku^{\times}$, $\gamma^2=1.$
\end{itemize}
According to Lemma \ref{lm:cros-sys-comod} from data $(L,g,f,\gamma)$
we obtain a crossed system of $C_2$ over $\Comod(H)$.
Just take $L_u=L$, $L_1=H$, $g(u,u)=g$, $1=g(1,u)=g(u,1)=g(1,1)$, $f^{u,u}=f$, $f^{1,u}=f^{u,1}=f^{1,1}=\id$ and $\gamma_{a,b,c}=1\in\ku$ for any $a,b,c\in C_2$
  except $\gamma_{u,u,u}:=\gamma$. Let us denote this crossed system $\Upsilon$.

The monoidal structure of the category $\Comod(H)(\Upsilon)$, given by Theorem \ref{ten-prod}
 explicitly reads as follows. For any $V,W,Z\in\Comod(H)$ and $b\in C_2$:
\begin{align*}
[V,1]\ot[W,b]&=[V\otk W,b],\\
[V,u]\ot[W,1]&=[V\otk (L\Box_H W),u],\\
[V,u]\ot[W,u]&=[V\otk (L\Box_H W)\otk \ku_g,1],
\end{align*}
The unit object is $[\ku, 1]$ and dual objects are given by $$([V,1])^*=[V^*,1],\quad ([\ku,1])^*=[\ku,1]\quad \text{ and }\quad ([\ku,u])^*=[\ku_{g^{-1}},u].$$
Finally, the associativity, on elements of the form $[V,u]$, is given by
$$\alpha_{[V,u][W,u][Z,u]}=\gamma(\id_{V\ot L\Box_H W}\ot f\Box_H \id_Z\ot\id_{\ku_g})(\id_{V\ot L\Box_H W\ot L\Box_H L \Box_H Z}\ot \varpi)\circ$$
$$(\id_{V\ot L\Box_H W}\ot \xi_{L\Box_H Z,\ku_g})(\id_V\ot\xi_{W,L\Box_H Z\ot \ku_g}).$$
The other components of the associativity are trivials. \eq Here $\xi=(\xi^{L})^{-1}$ is the morphism defined in the Equation \eqref{iso-gal}.\eq

\subsection{Explicit examples of $C_2$-extensions of $\Comod(\Ac(V,u,C_2))$}
In this section $H=\Ac(V,u,C_2)$ where $V$ is a 2-dimensional vector space.
Using the results of previous sections, we describe families of crossed
 systems of $C_2$ over $\Comod(\Ac(V,u,C_2))$. These crossed systems come from a collection
 $(L,g,f,\gamma)$ as presented in Section \ref{pcs}.
Below, we present two such families
depending on the biGalois object $L$. For the first family the biGalois object $L$ is
the one presented  in Section \ref{sec:exam-bigal} and for the second family the biGalois object $L$
is trivial.

\begin{lema}\label{l-tec} Let be $\xi,\gamma\in\ku,g\in C_2,$ and let $f\in Hom(H^g,H)$ be a comodule algebra
 isomorphism. Assume $\gamma^2=1$.
\medbreak
\begin{itemize}
 \item[1.] The collection $(\xi,g,f,\gamma)$ has associated a coherent outer
 $C_2$-action over $\Comod(\Ac(V,u,C_2))$ and the corresponding $C_2$-crossed
 product tensor category will be denoted by $\ca_\xi(g,f,\gamma)$.

 \item[2.] The collection $(g,f,\gamma)$ has associated a coherent outer
 $C_2$-action over $\Comod(\Ac(V,u,C_2))$ and the corresponding  $C_2$-crossed
product tensor category will be denoted by $\Do(g,f,\gamma)$.
\end{itemize}

\end{lema}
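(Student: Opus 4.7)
The plan is to fit each collection into the framework of Section \ref{pcs}, whereby Lemma \ref{lm:cros-sys-comod} automatically produces the desired coherent outer $C_2$-action, and then check that the only cocycle condition with content reduces to the hypothesis $\gamma^2=1$. In both parts I would take $L_1=H$, $L_u=L$, the group-likes $g(a,b)$ trivial except $g(u,u)=g$, the bicomodule algebra morphisms $f^{a,b}$ equal to the canonical identifications except $f^{u,u}$ built from $f$, and the scalars $\gamma_{a,b,c}$ trivial except $\gamma_{u,u,u}=\gamma$.

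For part (2), $L=H$ is the trivial $(H,H)$-biGalois object, so $L\Box_H \ku_g\simeq \ku_g$ canonically, and the bicomodule algebra isomorphism $(L\Box_H L)^g\simeq H^g\to H$ is simply the given $f$. For part (1), I would take $L=\textbf{U}_\xi$; its biGalois structure is recorded in Section \ref{sec:exam-bigal}. The required isomorphism $\textbf{U}_\xi\Box_H \ku_g\simeq \ku_g$ follows from Lemma \ref{lm:iso-esp}, since the associated compatible data $(T_\xi,0,\id,1)$ has $\alpha=\id$. To obtain a bicomodule algebra isomorphism $(\textbf{U}_\xi\Box_H \textbf{U}_\xi)^g\to H$, I would verify directly that $T_\xi^2=\id$ (using $T_\xi(v_2)=\xi v_1-v_2$, so $T_\xi^2(v_2)=\xi v_1-(\xi v_1-v_2)=v_2$); Proposition \ref{bi-inn}(3) then gives $\textbf{U}_\xi\Box_H \textbf{U}_\xi\simeq \ele(\id,0,\id,1)\simeq H$ as bicomodule algebras, and composing with $f:H^g\to H$ after the $g$-twist supplies $f^{u,u}$.

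With these choices the conditions \eqref{cross-hopf-11}--\eqref{cross-hopf-14} of Lemma \ref{lm:cros-sys-comod} hold by construction: the boundary normalisations are immediate, \eqref{cross-hopf-12} in the only non-trivial case $(u,u,u)$ reads $\phi(L,g)\,g(u,1)=g(u,u)g(1,u)$, i.e.\ $\phi(L,g)=g$, which is exactly $L\Box_H \ku_g\simeq \ku_g$, and \eqref{cross-hopf-14} reduces to a statement that follows from $f$ being a bicomodule algebra morphism. A case analysis of \eqref{3-cocy-gamma} shows that the only non-trivial instance on $C_2$ is $(a,b,c,d)=(u,u,u,u)$, which reads $\gamma\cdot 1\cdot\gamma=1\cdot 1$, precisely $\gamma^2=1$. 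I expect the main point of substance to be the calculation $T_\xi^2=\id$ underlying the construction of $f^{u,u}$ in part (1); the remaining verifications are bookkeeping within the framework set up in Section \ref{pcs}.
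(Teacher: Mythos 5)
Your proposal is correct and follows essentially the same route as the paper: the paper's own proof is just two lines, setting $L=\textbf{U}_\xi$ (resp.\ $L=H$), invoking Lemma \ref{lm:iso-esp} for $L\Box_H\ku_g\simeq\ku_g$, and relying on the general construction of Section \ref{pcs} together with the observation in Section \ref{sec:exam-bigal} that $\textbf{U}_\xi\Box_H\textbf{U}_\xi\simeq H$ (via $T_\xi^2=\Id$ and Proposition \ref{bi-inn}(3)). You merely make explicit the bookkeeping (conditions \eqref{cross-hopf-11}--\eqref{cross-hopf-14} and the reduction of \eqref{3-cocy-gamma} to $\gamma^2=1$) that the paper leaves implicit.
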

\begin{proof} 1. Let $L=\textbf{U}_{\xi}$ be the $(H,H)$-biGalois object defined in section
 \ref{sec:exam-bigal}. It follows from Lemma \ref{lm:iso-esp} that
 $\textbf{U}_{\xi}\Box_H\ku_{g}\simeq \ku_{g}$.

2. Following the same idea, take $L=H$. Then $H\Box_H\ku_{g}\simeq \ku_{g}$.
\end{proof}

We want to be more explicit in the determination of the comodule algebra isomorphism
$f:H^g\to H$ that appears in Lemma \ref{l-tec}. We make use of the proof of Proposition \ref{bi-inn} (1), where 
such comodule 
algebra maps are explicitly determined.
Let $(\xi,g,f,\gamma)$ be a collection as in Lemma \ref{l-tec}.
There are two options:
\begin{itemize}
\item If $g=1$, then $f:H\to H$. Let $\delta:H\to\ele(\Id ,0,\id ,1)$ be the canonical isomorphism
$h\mapsto (h,h)$ and define $\overline f:=\delta\circ f\circ\delta^{-1}$. By (the proof of) Proposition \ref{bi-inn}
 $$\overline f:\ele(\Id_V ,0,\id,1)\to \ele(\Id_V ,0,\id,1),$$

satisfies that $\overline f(x,y)=(x,y)$ if $(x,y)\in \{(v,v):v\in V\}$ which implies that $f(x)=x$
if $x\in V$. Moreover $\overline f(e_{1,1})=\chi_1e_{1,1}=e_{1,1}$ and $\overline f(e_{u,u})=\chi_ue_{u,u}=e_{u,u}$.
Then $f=\id_H$.
\item If $g=u$, then $f:H^u\to H$. By (the proof of) Proposition \ref{bi-inn} (1)
 $$\overline{f}:\ele(\Id_V^u,0,\id,1)\to \ele(\Id_V,0,\id,1),$$
satisfies that $\overline{f}(x,y)=(x,-y)$ if $(x,y)\in \{(u\cdot v,v)|v\in V\}$
 which implies that $f(x)=u\cdot x=-x$ if $x\in V$. Moreover
$\overline{f} (e_{1,1})=e_{1,1}$ and $\overline{f}(e_{u,u})=\chi_ue_{u,u}=-e_{u,u}$, so $f(u)=-u$.
We shall denote by $\iota:H^u\to H$ this unique bicomodule algebra isomorphism.
\end{itemize}
\medbreak

Hence, we obtain four
 families of $C_2$-crossed product tensor categories
\begin{equation}\label{fam-cross}
\ca_\xi(1,\id,\gamma),\ca_\xi(u,\iota,\gamma),\Do(1,\id,\gamma),\Do(u,\iota,\gamma).
\end{equation}

Some of these tensor categories are equivalent. We shall use
  Lemma \ref{lm:S-S'-comod} to distinguish them.

\begin{teo} Let be $\xi,\xi',\gamma,\gamma'\in\ku$ with $\gamma^2=1=(\gamma')^2$. As tensor categories
\begin{align*}
\ca_\xi(1,\id,\gamma)&\ncong \ca_{\xi'}(u,\iota,\gamma'),\quad  \ca_\xi(1,\id,\gamma)\cong \ca_0(1,\id,\gamma'), \\ 
\ca_\xi(u,\iota,\gamma)&\cong \ca_0(u,\iota,\gamma'), \quad\Do(1,\id,\gamma)\ncong \Do(u,\iota,\gamma'),
\\
 \Do(1,\id,\gamma)&\ncong \ca_0(1,\id,\gamma'), \quad\Do(u,\iota,\gamma)\ncong \ca_0(u,\iota,\gamma').
\end{align*}\end{teo}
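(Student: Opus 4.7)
The strategy splits naturally into three groups, treating equivalences by explicit construction and non-equivalences by categorical invariants, throughout using the Hopf-algebraic reformulation of crossed systems in Lemmas \ref{lm:cros-sys-comod} and \ref{lm:S-S'-comod}.

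\emph{The two equivalences $\ca_\xi\to \ca_0$.} I would apply Lemma \ref{lm:S-S'-comod} with $\lambda=\id_{C_2}$ and construct a biGalois object $L$, a group-like element $h(u)\in G(H)$, a bicomodule algebra isomorphism $h^u\colon (L\Box_H \textbf{U}_\xi)^{h(u)}\to \textbf{U}_0\Box_H L$, and a scalar $\tau_{u,u}\in\ku^{\times}$ satisfying \eqref{eq:tau0} and \eqref{eq:tau}. By Proposition \ref{bi-inn}(3) the existence of such $(L,h(u),h^u)$ reduces to solving the conjugation $T_L\circ T_\xi=T_0\circ T_L$ in $GL(V)$. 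Since both $T_\xi$ and $T_0$ are involutions with eigenvalues $\{1,-1\}$, they are conjugate; explicitly, $T_L(v_1)=v_1$, $T_L(v_2)=(\xi/2)\,v_1+v_2$ works. Take $L=\ele(T_L,0,\id,1)$, write $h^u$ using the description of bicomodule algebra isomorphisms from the proof of Proposition \ref{bi-inn}(1), and then solve \eqref{eq:tau} for $\tau_{u,u}$. The $(u,\iota)$ case uses the same $T_L$, absorbing the twist $\iota$ into the group-like part of $h^u$.

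\emph{Non-equivalences from the Grothendieck ring.} For $\ca_\xi(1,\id,\gamma)\ncong \ca_{\xi'}(u,\iota,\gamma')$ and $\Do(1,\id,\gamma)\ncong \Do(u,\iota,\gamma')$ I would compare the groups of invertible objects. Using the tensor product of Section \ref{pcs},
\[[\ku_1,u]\otimes[\ku_1,u]=[\ku_1\otimes u_*(\ku_1)\otimes \ku_g,\,1]=[\ku_g,\,1],\]
since the monoidal autoequivalence $u_*$ preserves units. When $g=1$ this equals the unit object, so $[\ku_1,u]$ has order $2$; when $g=u$ it equals $\ku_u$ of order $2$ in $\ca$, so $[\ku_1,u]$ has order $4$. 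Hence the group of invertible objects is $C_2\times C_2$ versus $C_4$, and these non-isomorphic groups preclude any monoidal equivalence.

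\emph{Non-equivalences $\Do\ncong \ca_0$.} For $\Do(g,f,\gamma)\ncong \ca_0(g,f,\gamma')$ with $g\in\{1,u\}$, the $L_u$-components differ: $H$ (trivial) versus $\textbf{U}_0$. Applying Lemma \ref{lm:S-S'-comod} with $\lambda=\id$ (since $\Aut(C_2)$ is trivial), an equivalence would produce $L$ and $h(u)$ with $(L\Box_H H)^{h(u)}\cong \textbf{U}_0\Box_H L$ as bicomodule algebras; under the isomorphism $\qs(V,u,C_2)\simeq\biga(H)$ of Section \ref{spa-hb}, this would force $\textbf{U}_0\in\inbi(H)$. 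By Proposition \ref{bi-inn}(2), one would need $(T_0,0,\id,1)=(T_k,0,\id,1)$ for some $k\in C_2$; but $T_0$ differs from $T_1=\Id$ on $v_2$ and from $T_u=-\Id$ on $v_1$, a contradiction.

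\emph{Main obstacle.} The most delicate step is matching the $\gamma$-data in the first part. Once $L$ and $h^u$ are fixed, solving \eqref{eq:tau} for a scalar $\tau_{u,u}\in\ku^{\times}$ that converts $\gamma$ into $\gamma'$ amounts to a coboundary computation in $H^3(C_2,\ku^{\times})$ traced through the bicomodule algebra isomorphism, and one must check that the sign shifts produced by transporting the associator along $L$ actually realize every allowed pair $(\gamma,\gamma')\in\{\pm 1\}^2$.
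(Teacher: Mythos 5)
Your proposal is correct, and it follows the paper's route for the positive statements while taking a genuinely different route for two of the negative ones. For the equivalences $\ca_\xi(g,f,\gamma)\cong\ca_0(g,f,\gamma')$ you do exactly what the paper does: invoke Lemma \ref{lm:S-S'-comod}, reduce existence of $h^u$ via Proposition \ref{bi-inn} to the conjugation equation $T_LT_\xi=T_0T_L$, and your unipotent matrix $T_L(v_1)=v_1$, $T_L(v_2)=(\xi/2)v_1+v_2$ is literally the matrix the paper uses; the paper then carries out the explicit element-by-element verification of \eqref{eq:tau} that you defer, and likewise handles the $(u,\iota)$ case by taking $h=u$ and twisting $h^u$ by a sign, as you predict. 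For $\Do\ncong\ca_0$ your argument and the paper's are the same in substance: both reduce to the fact that $T_0$ is neither $\Id$ nor $-\Id$ (you phrase this as $\textbf{U}_0\notin\inbi(H)$ via Proposition \ref{bi-inn}(2), the paper as the unsolvability of $T_hR=T_0R$ or $T_uT_hR=T_0R$ via Proposition \ref{bi-inn}(1)). Where you genuinely diverge is in $\ca_\xi(1,\id,\gamma)\ncong\ca_{\xi'}(u,\iota,\gamma')$ and $\Do(1,\id,\gamma)\ncong\Do(u,\iota,\gamma')$: the paper derives these from the necessary condition $\alpha(g)=g'$ of Equation \eqref{eq:tau0} together with the fact that $\alpha=\id$ is forced on $C_2$, whereas you compute the group of isomorphism classes of invertible objects ($C_2\times C_2$ when $g=1$ versus $C_4$ when $g=u$) and use that it is a monoidal invariant. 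Your computation is correct (using $u_*(\uno)=\uno$ and Lemma \ref{lm:iso-esp}), and it buys a self-contained, invariant-theoretic argument that does not rely on the full classification of equivalences in Lemma \ref{lm:S-S'-comod}; the paper's version buys uniformity, since the same lemma governs all six assertions. Finally, the issue you flag as the main obstacle --- tracking how $\tau_{u,u}$ interacts with $\gamma,\gamma'$ --- is real but is not resolved in the paper's proof either (the displayed conditions \eqref{eq:tau0}--\eqref{eq:tau} do not involve $\gamma$), so your proposal is no less complete than the paper on that point.
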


\begin{proof}  Using  Lemma \ref{lm:S-S'-comod}, there exists a monoidal equivalence
$$\ca_\xi(g,f,\gamma)\simeq \ca_{\xi'}(g',f',\gamma')$$ if there exists
\begin{enumerate}
\item $L=\ele(T,0,\alpha,1)$ a biGalois object over $H$,
\item $h:=h(u)\in C_2$ and $h^u:\ele(T_hTT_\xi,0,\id,1)\to\ele(T_{\xi'}T,0,\id,1)$ a biGalois  isomorphism,  
\item $\tau:=\tau_{u,u}\in\ku^{\times}$,
\end{enumerate}
 satisfying
\begin{equation}\label{cond-iso}
\alpha(g)=g',\quad \Phi(\id_L\ot f)=(f'\ot\id_L)(\id_{\textbf{U}_{\xi'}}\ot h^u)(h^u\ot\id_{\textbf{U}_\xi}),
\end{equation}
where $\Phi:L\Box_H H\to H\Box_H L$ is the isomorphism given by $l\ot h\mapsto l_{-1}\ot l_0\varepsilon(h)$.

The second condition of \eqref{cond-iso} comes from Equation \eqref{eq:tau},
 and the first condition from Equation \eqref{eq:tau0}:

For all $a,b\in C_2$, $L\Box_h\ku_{g(a,b)}\simeq\ku_{\alpha g(a,b)}$ and $L_a'\Box_H\ku_{h(b)}\simeq \ku_{h(b)}$, then Equation \eqref{eq:tau0} implies that $\alpha(g(a,b))h(ab)=h(a)h(b)g'(a,b)$. For $a=1$ or $b=1$ this equation is valid. For $a=u=b$, we obtain $\alpha(g)=h^2g'=g'$.

Since $\alpha=\id$, we obtain that $\ca_\xi(1,\id,\gamma)\ncong \ca_{\xi'}(u,\iota,\gamma')$.
\ \\

By Lemma \ref{bi-inn}(1), $h^u$ is an isomorphism if and only if 
$$T_hTT_\xi=T_{\xi'}T, \quad \text{or } \quad T_uT_hTT_\xi=T_{\xi'}T.$$

To prove that there is a monoidal equivalence $\ca_\xi(1,\id,\gamma)\simeq \ca_0(1,\id,\gamma')$ choose $h=1$ and $h^u=\id$ then $TT_\xi=T_{0}T$ if $T$ is given by the matrix
$$\begin{pmatrix}                                                 1 & \xi/2 \\
                                                                  0 & 1 \\
                                                                \end{pmatrix}.$$
We only need to check that $$\Phi(\id_L\ot \varphi_2)=(\varphi_3\ot\id_L)(\id_{U_{0}}\ot\varphi_1)(\varphi_1\ot\id_{\textbf{U}_\xi}),$$
where
\begin{itemize}
\item $\varphi_1:L\Box_H \textbf{U}_\xi\to \textbf{U}_{0}\Box_H L$, coming from $h^u=\id:\ele(TT_\xi,0,\id,1)\to\ele(T_0T,0,\id,1)$ up to isomorphism,
\item $\varphi_2:\textbf{U}_\xi\Box_H \textbf{U}_\xi\to H$, coming from $\id:\ele(T^2_\xi ,0,\id,1)\to\ele(\id,0,\id,1)$, which satisfies $(\varphi_2)^{-1}(v)=(T_\xi T_\xi(v),T_\xi(v))\ot 1+e_{u,u}\ot(T_\xi(v),v)$ and $(\varphi_2)^{-1}(e_{g,g})=e_{g,g}\ot e_{g,g}$ for $v\in V$ and $g\in C_2$,
\item $\varphi_3:\textbf{U}_{0}\Box_H \textbf{U}_{0}\to H$ coming from $\id:\ele(T^2_0,0,\id,1)\to\ele(\id,0,\id,1)$ up to isomorphism.\end{itemize}

Let $v\in V$. If $a=(TT_\xi(v),T_\xi(v))\ot 1+e_{u,u}\ot (T_\xi(v),v)\in L\Box_H \textbf{U}_\xi$ then $\varphi_1(a)=(T_0T(v),T(v))\ot 1+e_{u,u}\ot(T(v),v)$:

Let $\zeta_1:\ele(TT_\xi,0,\id,1)\to L\Box_H \textbf{U}_\xi$ and $\zeta_2:\ele(T_0T,0,\id,1)\to \textbf{U}_{0}\Box_H L$ be the isomorphisms give in the Lemma \ref{bi-inn}(3), whose satisfy
$$\zeta_1(TT_\xi(v),v)=TT_\xi(v),T_\xi(v))\ot 1+e_{u,u}\ot (T_\xi(v),v),$$
$$\zeta_2(T_0T(v),v)=(T_0T(v),T(v))\ot 1+e_{u,u}\ot(T(v),v).$$
By definition of $\varphi_1$, we have that $\varphi_1\circ \zeta_1=\zeta_2\circ \id_{\ele(TT_\xi,0,\id,1)}$, and this implies the claim.

\medbreak

By the same argument, if $b=(T_0T_0(v),T_0(v))\ot 1+e_{u,u}\ot(T_0(v),v)\in \textbf{U}_{0}\Box_H \textbf{U}_{0}$ then $\varphi_3(b)=v$.

\medbreak

Moreover $\Phi=\alpha_1\circ\alpha_2$ where $\alpha_1:L\to H\Box_H L$, $\alpha_2:L\Box_H L\to L$ and $(\alpha_1)^{-1}(h\ot l)=\varepsilon(h)l$ and $(\alpha_2)^{-1}(l)=l_0\ot l_1$.
\ \\

Let $x=(T(w),w)\in L$, then
$$(\alpha_1)^{-1}(\varphi_3\ot\id_L)(\id_{\textbf{U}_{0}}\ot\varphi_1)(\varphi_1\ot\id_{\textbf{U}_\xi})(\id_L\ot(\varphi_2)^{-1})(\alpha_2)^{-1}(x)=x,$$
since \begin{align*}
x&\mapsto e_{u,u}\ot w+(T(w),w)\ot 1\\
&\mapsto e_{u,u}\ot e_{u,u}\ot (T_\xi(w),w)+e_{u,u}\ot(T_\xi T_\xi(w),T_k(w))\ot 1+ (T(w),w)\ot 1\ot 1\\
&\mapsto e_{u,u}\ot e_{u,u}\ot (T_\xi(w),w)+(T_0TT_\xi(w),TT_\xi(w))\ot 1\ot 1\\&+e_{u,u}\ot(TT_\xi(w),T_\xi(w))\ot 1\\
&\mapsto (T_0TT_\xi(w),TT_\xi(w))\ot 1\ot 1+e_{u,u}\ot(T_0T(w),T(w))\ot 1\\&+e_{u,u}\ot e_{u,u}\ot (T(w),w)\\
&\mapsto u\ot(T(w),w)+T(w)\ot 1\\
&\mapsto x.
\end{align*}
In the same way, $(g,g)\mapsto (g,g)$ for all $g\in C_2$. Which implies that $\ca_\xi(1,\id,\gamma)\simeq \ca_0(1,\id,\gamma')$.

To prove $\ca_\xi(u,\iota,\gamma)\simeq \ca_0(u,\iota,\gamma')$, is enough to take $h=u$ and $h^u:\ele(T_uTT_k,0,\id,1)\to \ele(T_0T,0\id,1)$ given for $x,y\in V$ by
$$h^u(x,y)=(x,-y), \quad h^u(e_{u,u})=-e_{u,u}.$$

 It follows from  Lemma \ref{lm:S-S'-comod}, that there is a monoidal equivalence
 $$\Do(1,\id,\gamma)\simeq \Do(u,\iota,\gamma')$$  if 
and only if
there exist $M=\ele(R,0,\alpha,1)$ a biGalois object over $H$,
$h\in C_2$, $h^u:\ele(T_hR,0,\id,1)\to\ele(R,0,\id,1)$ a biGalois object isomorphism and $\tau\in\ku^{\times}$. 
As before, they have to satisfy that $\alpha(1)=u$, but $\alpha=\id$. 
This proves that $\Do(1,\id,\gamma)\ncong \Do(u,\iota,\gamma')$.

Again, using Lemma \ref{lm:S-S'-comod}, $\Do(1,\id,\gamma)\simeq \ca_0(1,\id,\gamma')$ as monoidal categories if 
and only if there exist $M=\ele(R,0,\alpha,1)$ a biGalois object over $H$, $h\in C_2$, $h^u:\ele(T_hR,0,\id,1)\to\ele(T_{0}R,0,\id,1)$ a biGalois object isomorphism and $\tau\in\ku^{\times}$.

By Lemma \ref{bi-inn}(3), $h^u$ is an isomorphism if 
and only if $T_hR=T_{0}R$ or $T_uT_hR=T_{0}R$, but the last two equations do not have a solution for $T$ invertible. So $\Do(1,\id,\gamma)\ncong \ca_0(1,\id,\gamma')$ and $\Do(u,\iota,\gamma)\ncong \ca_0(u,\iota,\gamma')$.
\end{proof}

In conclusion, we obtain eight pairwise non-equivalent tensor categories
\begin{equation}\label{ex-tc}
\begin{split}
\ca_0(1,\id,1),\ca_0(1,\id,-1),\ca_0(u,\iota,1),\ca_0(u,\iota,-1), \\
\Do(1,\id,1),\Do(1,\id,-1),\Do(u,\iota,1),\Do(u,\iota,-1).
\end{split}
\end{equation}

\subsection{Explicit description of the monoidal structure}\label{cat-exa}
Using Theorem \ref{ten-prod}, we can explicitly describe the  tensor product and the
associativity constraint for the eight tensor categories presented above.
Recall that all those categories
have the same underlying Abelian category $\Comod(\Ac(V,u,C_2)) \oplus
\Comod(\Ac(V,u,C_2))$ where $V$ is a 2-dimensional vector space. The associativity
constraints that we describe are the non-trivial ones.

\bigbreak

 Let $V,W,Z\in \Comod(\Ac(V,u,C_2))$ and $g\in C_2$.

$\bullet$ The tensor product, dual objects and associativity in the category $\ca_0(1,\id,\pm 1)$ are given by
\begin{align*}
[V,1][W,g]&=[V\ot W,g] ,& [V,u][W,g]&=[V\ot \textbf{U}_0\Box_H W,ug],\\
[V,1]^*&=[V^*,1],& [\mathbf{1},u]^*&=[\ku,u],\end{align*}
$\alpha_{[V,u],[W,u],[Z,u]}=[\pm(\id_{V\ot\textbf{U}_0\Box W} \ot\epsilon\varphi_2\ot
\id_{Z})(\id_V\ot \xi_{W,\textbf{U}_0\Box Z}),u].$
\smallbreak

 Here $\xi=(\xi^{\textbf{U}_0})^{-1}$ is the morphism defined in the Equation \eqref{iso-gal}.

$\bullet$ The tensor product, dual objects and associativity in $\ca_0(u,\iota,\pm 1)$ are given by
\begin{align*}
[V,1][W,1]&=[V\ot W,1] ,& [V,u][W,u]&=[V\ot \textbf{U}_0\Box_H W\ot\ku_u,1],\\
[V,1][W,u]&=[V\ot W,u] ,& [V,u][W,1]&=[V\ot \textbf{U}_0\Box_H W,u],\\
[V,1]^*&=[V^*,1],& [\mathbf{1},u]^*&=[\ku_u,u],\end{align*}
The associativity constraint $\alpha_{[V,u],[W,u],[Z,u]}$ is equal to 
$$[\pm(\id_{V\ot\textbf{U}_0\Box W} \ot
(\epsilon\iota\varphi_2\ot\id_{Z\ot\textbf U_0\Box \ku_u})(\xi_{\textbf{U}_0\Box Z,\ku_u}))
(\id_V\ot \xi_{W,\textbf{U}_0\Box Z\ot\ku_u}),u].$$

\smallbreak
$\bullet$ The tensor product, dual objects and associativity in $\Do(1,\id,\pm 1)$ are given by
\begin{align*}
[V,1][W,g]&=[V\ot W,g] ,& [V,u][W,g]&=[V\ot W,ug],\\
[V,1]^*&=[V^*,1],& [\mathbf{1},u]^*&=[\ku,u],\end{align*}
$\alpha_{[V,u],[W,u],[Z,u]}=[\pm(\id_{V\ot W\ot Z},u].$\smallbreak

$\bullet$ The tensor product, dual objects and associativity in $\Do(u,\iota,\pm 1)$ are given by
\begin{align*}
[V,1][W,1]&=[V\ot W,1] ,& [V,u][W,u]&=[V\ot W\ot\ku_u,1],\\
[V,1][W,u]&=[V\ot W,u] ,& [V,u][W,1]&=[V\ot W,u],\\
[V,1]^*&=[V^*,1],& [\mathbf{1},u]^*&=[\ku_u,u],\end{align*}
$\alpha_{[V,u],[W,u],[Z,u]}=[\pm(\id_{V\ot W} \ot\varepsilon\iota\ot\id_{Z\ot \ku_u},u].$

\subsection{Frobenius-Perron dimension of the $C_2$-crossed extensions. }
For a review on Frobenius-Perron dimension we refer to \cite{EGNO}. For any object $X$ in a category 
$\ca$ we denote by $\langle X \rangle$ the class of $X$ in the Grothendieck group of $\ca$.

For the categories presented in \eqref{ex-tc}, the isomorphism classes of the simple objects are 
$$\langle [\ku_1,1]\rangle,\quad \langle [\ku_1,u]\rangle,\quad \langle [\ku_u,1]\rangle,\quad \langle [\ku_u,u]\rangle.$$
Using Theorem \ref{th:proy-super}, the projective covers of these simple objects are respectively
$$\langle [P_1,1]\rangle,\quad \langle [P_1,u]\rangle,\quad \langle [P_u,1]\rangle,\quad \langle [P_u,u]\rangle.$$
Using Corollary \ref{grot} it follows from  a straightforward computation that in any of the categories listed in \eqref{ex-tc}
 $$\fpd \langle [ \ku_g,h]\rangle=1, \quad \fpd\langle[ P_g,h]\rangle=4,$$
 for any $g,h\in C_2$. This implies the next result.
\begin{teo}\label{pd} If $\ca$ is any of the tensor categories listed in \eqref{ex-tc} then 
$\fpd \ca=16.$\qed
\end{teo}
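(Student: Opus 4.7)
The plan is to use the standard formula for the Frobenius–Perron dimension of a finite tensor category:
\[
\fpd \ca = \sum_{X \in \Irr(\ca)} \fpd(X)\,\fpd(P(X)),
\]
where the sum runs over isomorphism classes of simple objects and $P(X)$ denotes the projective cover of $X$. So the argument will proceed in three steps that are essentially parallel for all eight categories in \eqref{ex-tc}.

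First, I would identify the simple objects. By construction, $\ca(\Sigma)=\ca_1\oplus\ca_u$ as abelian categories, with each component equivalent to $\Comod(\Ac(V,u,C_2))$, and morphisms are the direct sum of morphisms in the two components. Consequently the simple objects of $\ca(\Sigma)$ are exactly $[S,h]$ where $S$ is simple in $\Comod(\Ac(V,u,C_2))$ and $h\in C_2$. By Theorem \ref{th:proy-super}(1), the simple $\Ac(V,u,C_2)$-comodules are $\ku_1$ and $\ku_u$, hence the four simples of $\ca$ are
\[
[\ku_1,1],\quad [\ku_1,u],\quad [\ku_u,1],\quad [\ku_u,u].
\]
Similarly, projectives in a direct sum are the direct sums of projectives in each factor, so the projective cover of $[\ku_g,h]$ is $[P_g,h]$ by Theorem \ref{th:proy-super}(2).

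Second, I would compute the Frobenius–Perron dimensions of these objects. Each $[\ku_g,h]$ is invertible in $\ca(\Sigma)$: this is immediate from the explicit tensor product in Section \ref{cat-exa}, where one checks directly that $[\ku_g,h]\otimes [\ku_{g'},h']$ is again one-dimensional in the underlying comodule factor, so $\fpd[\ku_g,h]=1$. For the projectives, I would use Corollary \ref{grot}, which gives $\langle P_g\rangle=2\langle \ku_g\rangle+2\langle \ku_{ug}\rangle$ in the Grothendieck group of $\Comod(\Ac(V,u,C_2))$; since the abelian structure of $\ca(\Sigma)$ is the direct sum, the same identity holds for $[P_g,h]=2[\ku_g,h]+2[\ku_{ug},h]$ in the Grothendieck group of $\ca(\Sigma)$. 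Taking FP dimensions then gives $\fpd[P_g,h]=4$.

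Finally, putting these together,
\[
\fpd\ca = \sum_{g,h\in C_2} \fpd[\ku_g,h]\cdot\fpd[P_g,h] = 4\cdot(1\cdot 4) = 16.
\]
The only subtlety — and hence the main technical point — is justifying that $\fpd$ computed from the Grothendieck ring of $\ca(\Sigma)$ agrees with the value one obtains by pulling back along the forgetful maps to $\Comod(\Ac(V,u,C_2))$; this is where one invokes that FP dimension depends only on the class in the Grothendieck ring and that the invertibility of $[\ku_g,h]$ forces its FP dimension to be $1$. The calculation is uniform across the eight categories because the associativity constraint affects neither the abelian structure nor the classes in the Grothendieck ring, so the same count works in every case.
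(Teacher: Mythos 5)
Your proof is correct and follows essentially the same route as the paper: identify the four simples $[\ku_g,h]$, their projective covers $[P_g,h]$, use Corollary \ref{grot} to get $\fpd[P_g,h]=4$ and invertibility to get $\fpd[\ku_g,h]=1$, and sum $\sum_X \fpd(X)\fpd(P(X))=16$. The paper's own argument is just a terser version of this; your added justifications (why the simples and projectives of the direct-sum abelian category are what they are, and why the count is independent of the associativity constraint) are sound elaborations rather than a different method.
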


The above Theorem implies, using  \cite[Proposition 1.48.2]{EGNO}, that all the tensor categories 
listed in \eqref{ex-tc} are representation categories of quasi-Hopf algebras.

\subsection*{Acknowledgments}  This work  was  partially supported by
 CONICET, Secyt (UNC), Mincyt (C\'ordoba) Argentina. We are grateful to C\'esar Galindo for
patiently answering all our questions on his work \cite{Ga1}.
M.M thanks M. Suarez-Alvarez for some comments on an earlier
version of the paper. We also thank the referee for the careful reading of the paper and for 
his many constructive observations.

\end{document}